\newcommand{\ie}{{i.e.}\ }
\newcommand{\R}{\ensuremath{\mathbb{R}}}
\newcommand{\Rp}{\ensuremath{\mathbb{R}_+}}
\newcommand{\Z}{\ensuremath{\mathbb{Z}}}
\newcommand{\E}{\ensuremath{\mathbb{E}}}
\newcommand{\En}{\ensuremath{\mathcal{E}}}
\renewcommand{\d}{\,d}
\renewcommand{\epsilon}{\varepsilon}
\newcommand{\bdry}{\partial}
\DeclareMathOperator{\bal}{Bal}
\DeclareMathOperator{\gds}{GDS}
\DeclareMathOperator{\supp}{supp}
\newtheorem{theorem}{Theorem}[section]
\newtheorem{proposition}[theorem]{Proposition}
\newtheorem{lemma}[theorem]{Lemma}
\newtheorem{conjecture}[theorem]{Conjecture}
\theoremstyle{definition}
\newtheorem{definition}[theorem]{Definition}
\newtheorem{remark}[theorem]{Remark}
\def\blfootnote{\xdef\@thefnmark{}\@footnotetext}
\title{Partial Balayage and a Generalization of the Divisible Sandpile Model }
\author{Joakim Roos \\[.1cm] \small{KTH Royal Institute of Technology}}
\date{}
\begin{document}

\maketitle

\vspace{.2cm}
\begin{abstract}
  In recent work by L.~Levine and Y.~Peres, it was observed that three
  models for particle aggregation on the lattice---the divisible
  sandpile, rotor-router aggregation, and internal diffusion limited
  agg\-regation---share a common scaling limit as the lattice spacing
  tends to zero, if they are started with the same initial mass
  configuration. It is straightforward to observe that this scaling
  limit is precisely the same as the potential-theoretic operation of
  taking the partial balayage of this initial mass configuration to
  the Lebesgue measure. However, from the theory of the partial
  balayage operation it is clear that one may take the partial
  balayage of a mass configuration to a more general measure than the
  Lebesgue measure, which one cannot do for the three aggregation
  models described by Levine and Peres. In this paper we therefore
  generalize one of these models, the divisible sandpile model, in
  mainly a bounded setting, and show that a natural scaling limit of
  this generalization is given by a general partial balayage
  operation.
\end{abstract}
\vspace{.2cm}

\blfootnote{\textit{2010 Mathematics Subject Classification.} Primary 31C20, Secondary 35R35.}
\blfootnote{\textit{Key words and phrases.} Divisible sandpile, partial balayage, obstacle problem.}

\section{Introduction}
In this section we review the results from L.~Levine and
Y.~Peres~\cite{levine-07,levine-peres-09} regarding the divisible
sandpile model (DS) and how its scaling limit is related to so-called
partial balayage to unit density, $\bal(\cdot, 1)$. Throughout, the
dimension $d$ will be assumed to satisfy $d \geq 2$.

\subsection{Preliminaries and main result}
\label{sec:prel-main-result}
Let $\mu: \R^d \to \Rp$ be a bounded and almost everywhere continuous
function, with the property that
$\left\{ x \in \R^d: \mu(x) \geq 1 \right\}$ is the closure of some
open bounded set $\Omega$. Given a decreasing sequence
$\{\xi_n\}_{n=1}^\infty$ of positive real numbers with limit zero as
$n \to \infty$ we define the \emph{discretized mass configuration}
$\mu_n: \xi_n \Z^d \to \Rp$ on the scaled lattice $\xi_n \Z^d$ by
\begin{align*}
  \mu_n(x) := \frac{1}{\xi_n^d} \int_{x^\square} \mu(y) \d y,
\end{align*}
where the symbol $x^\square$ denotes the closed cube in $\R^d$ of side
length $\xi_n$ and midpoint $x$, \ie the set
\begin{align*}
  x^\square := x + \left[ - \frac{\xi_n}{2}, \frac{\xi_n}{2}
  \right]^d.
\end{align*}
Since the volume of any such cube is
$\xi_n^d$ we see from the above that the value of a discretization
$\mu_n$ of $\mu$ at a point $x \in
\xi_n$ is nothing but the mean value of $\mu$ in the set
$x^\square$. We will also employ the notation that
$x^{::}$ is the closest lattice point to $x \in
\R^d$ (\ie if the lattice in question is $\xi \Z^d$, then $x^{::} = (x
+ (\xi/2, \xi/2]^d) \cap (\xi \Z^d)$). Moreover, if
$f$ is a function on $\R^d$ then
$f^{::}$ is defined as the restriction of
$f$ to the underlying lattice (determined by the context), and,
similarly, if $g$ is a lattice function on some lattice $\xi
\Z^d$, then $g^\square$ is the extension of
$g$ as a step function to $\R^d$ defined by $g^\square(x) :=
g(x^{::})$.

We need to say a few words about convergence of sequences
of sets relative to our sequence $\{\xi_n\}_{n=1}^\infty$ of
decreasing lattice constants: a sequence of sets
$\{A_n\}_{n=1}^\infty$, where $A_n \subset \xi_n \Z^d$, is said to
converge to a set $D \subset \R^d$ if there for any given $\epsilon >
0$ exists some integer $N$ such that we for all $n > N$ have
\begin{align*}
  D_\epsilon \cap \xi_n \Z^d \subset A_n \subset D^\epsilon,
\end{align*}
where $D_\epsilon$ and $D^\epsilon$ are subsets of $\R^d$,
the \emph{inner} and \emph{outer $\epsilon$-neighbourhoods of
  $D$}, respectively, defined by
\begin{align*} 
  D_\epsilon := \{x \in D : B(x, \epsilon) \subset D\}
\end{align*}
and
\begin{align*}
  D^\epsilon := \{x \in \R^d : B(x, \epsilon) \cap D \neq
  \emptyset\},
\end{align*}
so that $D_\epsilon \subset D \subset D^\epsilon$; here $B(a, \rho)$
is the open ball in $\R^d$ of radius $\rho > 0$ centred at $a \in
\R^d$.

Having treated the necessary technicalities the divisible sandpile
model on $\xi \Z^d$ for some lattice constant $\xi > 0$ is now defined
as follows: given a function $\mu: \xi \Z^d \to \Rp$, to be
interpreted as our initial mass configuration, we pick to begin with
any site $x \in \xi \Z^d$ for which $M := \mu(x) > 1$---we can think
of $\mu(x)$ to be the mass or number of (sand-)particles at $x$
(ignoring the fact that we very much allow for non-integral number of
particles), and hence that the site $x$ is chosen in such a way that
it has more than one particle. We now \emph{topple} the site $x$, by
which we mean that we leave a unit mass at $x$, and distribute the
remaining mass of $M - 1$ uniformly amongst the $2d$ neighbours $y$ of
$x$; for sake of simplicity we will write $y \sim x$ if $y$ is a
neighbour to $x$. In essence, we alter the mass configuration $\mu$ by
replacing $\mu(x)$ with $1$, and $\mu(y)$ with $\mu(y) + (M - 1)/2d$
for each $y \sim x$ to obtain a new mass configuration
$\mu': \xi \Z^d \to \Rp$. We now do the previous steps again starting
from $\mu'$ instead of $\mu$, and continue repeating this process over
and over again until we reach (in the limit) a final mass
configuration $\nu : \xi \Z^d \to \Rp$ which satisfies
$0 \leq \nu \leq 1$ everywhere. (That there even exists such a final
mass configuration $\nu$, not to mention the fact that this
configuration actually also is independent of the particular choice of
toppling sequence used, is highly non-trivial, but true under our
assumptions on $\mu$.) This process is what we call the
\emph{(standard) divisible sandpile}, and we call the final mass
configuration $\nu$ the \emph{(standard) divisible sandpile
  configuration of $\mu$ (on $\xi \Z^d$)}.

A highly important function $u$ called the \emph{odometer function}
can be defined for the divisible sandpile model: if $\xi \Z^d$ is the
lattice in question then $u$ is the function defined by letting $u(x)$
be $\xi^2$ times the total mass emitted from a lattice point
$x \in \xi \Z^d$ during the entire divisible sandpile process. Here
the factor $\xi^2$ is to ensure the proper limiting behaviour when we
later let $\xi \to 0$. If we study the algorithm for the divisible
sandpile model in detail it becomes clear that any site
$x \in \xi \Z^d$ will, in the end, have emitted a total mass of
$u(x) / 2d \xi^2$ to each of its $2d$ neighbours. But this reasoning
also applies to the neighbouring sites of $x$, hence each neighbour
$y$ will in total have sent mass of size $u(y) / 2d \xi^2$ to $x$. It
follows that the net \emph{increase} in mass at the site $x$ will be
the difference between the total mass received and the total mass
emitted, \ie precisely
\begin{align*}
  \sum_{y \sim x} \frac{u(y)}{2d\xi^2} - \sum_{y \sim x}
  \frac{u(x)}{2d\xi^2} = \sum_{y \sim x} \frac{u(y) - u(x)}{2d\xi^2} =
  \Delta u(x),
\end{align*}
where $\Delta$ is the (for our purposes suitably renormalized)
discrete Laplace operator. But this is only one way of calculating the
net increase of mass at $x$: with $\mu$ the initial mass
configuration and $\nu$ the mass configuration we end up with after
the aggregation is completed as above, we evidently have
\begin{align}
  \label{eq:Lapl-u-diff-mass-confs}
  \Delta u(x) = \nu(x) - \mu(x).
\end{align}
The main goal of our study is to calculate the resulting set of fully
occupied sites for the resulting divisible sandpile configuration, and
for this we observe that the odometer function $u$ can in fact be used
to determine this set completely. The set of such fully occupied sites
is of course the set
\begin{align*}
  D := \{x \in \xi \Z^d: \nu(x) = 1\}.
\end{align*}
If we consider any such $x \in D$ we must either have that no toppling
occurred at $x$ at any stage during the course of the divisible
sandpile algorithm, or that the site $x$ did topple at least once. If
$x$ did not topple, then no mass has left $x$, so $x$ must either have
had mass one during the entire course of the sandpile algorithm---if
so then $x$ must belong to the set $\{\mu \geq 1\}$---or must have
received mass from one from its neighbouring points, \ie must have a
neighbour that \emph{did} topple. On the other hand, if it in fact did
perform a toppling at some stage during the course of the algorithm,
then $u(x) > 0$. From these considerations we can conclude that, up to
possibly a (in some sense negligible) set of boundary points, the set
$D$ of fully occupied sites is essentially
\begin{align*}
  \{\mu \geq 1\} \cup \{u > 0\}.
\end{align*}
With this in mind, it is clear that we gain much information about the
set $D$ by finding the odometer function $u$, and the approach we will
take is to find $u$ as the solution to the equation
\eqref{eq:Lapl-u-diff-mass-confs}. Since $\nu$ will, by construction,
always satisfy $\nu \leq 1$, it is suitable to find a function
$\gamma$ that satisfies $\Delta \gamma(x) = 1 - \mu(x)$, since if we
then study the function $s' := \gamma - u$ we see that
\begin{align*}
  \Delta s'(x) &= \Delta(\gamma - u)(x) = \Delta \gamma(x) -
  \Delta u(x) \\
  &= 1 - \mu(x) - \nu(x) + \mu(x) = 1 - \nu(x) \geq 0
\end{align*}
holds everywhere, \ie $s'$ is a subharmonic function on $\xi
\Z^d$. We note that $s'$ satisfies $s' \leq \gamma$, since $u \geq 0$
by definition. Moreover, if $f$ is any other subharmonic function on
$\xi \Z^d$ satisfying $f \leq \gamma$, then
\begin{align*}
  \Delta(f - \gamma + u)(x) &= \Delta f(x) - 1 + \mu(x) + \nu(x) - \mu(x) \\
  &= \Delta f(x) - 1 + \nu(x) = \Delta f(x) \geq 0
\end{align*}
if $x \in D = \{\nu = 1\}$, and for $x$ outside $D$ we have $u(x) =
0$, hence
\begin{align*}
  f(x) - \gamma(x) + u(x) = f(x) - \gamma(x) \leq 0
\end{align*}
there. It follows that $f - \gamma + u$ is a nonpositive function
everywhere, \ie that $f \leq \gamma - u = s'$ on the whole of $\xi
\Z^d$. Thus, if we let $s$ be the subharmonic function defined by
\begin{align}
  \label{eq:s-def-on-lattice}
  s(x) := \sup \{f(x) : f \text{ is subharmonic in } \xi \Z^d
  \text{ and } f \leq \gamma\}
\end{align}
it follows both that $s \leq \gamma - u = s'$, but also $s \geq \gamma
- u$, since $s' = \gamma - u$ is a competing function in the set
defining $s$ in \eqref{eq:s-def-on-lattice}. We can conclude that we
in fact have
\begin{align*}
  u = \gamma - s
\end{align*}
where $s$ is given by \eqref{eq:s-def-on-lattice}.

We have converted the problem of finding the odometer function $u$, in
particular finding the set $\{ u > 0 \} = \{\gamma > s\}$, into
solving the \emph{obstacle problem} \eqref{eq:s-def-on-lattice}, a
problem that has a natural generalization to the continuous
setting. Therefore, given some initial mass configuration $\mu$ on
$\R^d$ (with appropriate assumptions on $\mu$ to ensure existence, and
so on) we define the obstacle $\gamma_c: \R^d \to \R$ by
\begin{align*}
  \gamma_c(x) := -|x|^2 - N * \mu(x)
\end{align*}
where $N(x)$ is the Newton kernel on $\R^d$, proportional to $\log
|x|^{-1}$ in two dimensions and to $|x|^{2-d}$ for $d \geq 3$, such
that $\Delta \gamma_c = \mu - 1$. As in
\eqref{eq:s-def-on-lattice}, we then define
\begin{align}
  \label{eq:s-def-on-cont}
  s_c(x) := \sup \{f(x) : f \in \mathcal{CS}(\R^d)
  \text{ and } f \leq \gamma_c\},
\end{align}
where $\mathcal{CS}(S)$ denotes the set of functions continuous and
subharmonic on some open set $S$. Assuming we can find a solution
$s_c$ to \eqref{eq:s-def-on-cont}, it can be seen that the set
\begin{align}
  \label{eq:def-set-D-from-obst-prob}
  D := \{ x \in \R^d : \gamma(x) > s_c(x)\}
\end{align}
will be the natural limit set, in the sense discussed above, to the
sequence of sets $\{u_n > 0\}$ where $u_n$ is the $n$th
odometer function for the divisible sandpile model for a sequence of
decreasing positive lattice constants $\{\xi_n\}_{n=1}^\infty$
converging to zero.

We are now ready to state one of the main results from Levine's thesis
\cite{levine-07}:
\begin{theorem}
  \label{thm:levine-main-thm}
  Let $\{\xi_n\}_{n=1}^\infty$ be a decreasing sequence of positive
  real numbers converging to zero, and let $\{\mu_n\}_{n=1}^\infty$ be
  a discretized mass configuration based on the sequence
  $\{\xi_n\}_{n=1}^\infty$ for some mass configuration $\mu: \R^d
  \to \Rp$ as above, with $\Omega$ the open bounded set satisfying
  $\overline{\Omega} = \{\mu \geq 1\}$. Let $D_n$ be the domain of
  occupied sites from the standard divisible sandpile in the lattice
  $\xi_n \Z^d$ started from source density $\mu_n$. Then, as $n
  \to \infty$,
  \begin{align*}
    D_n \to D \cup \Omega,
  \end{align*}
  where $D$ is the set given by
  \eqref{eq:def-set-D-from-obst-prob}.
\end{theorem}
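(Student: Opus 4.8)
The plan is to reduce the convergence of the domains to uniform convergence of the odometer functions, and to obtain the latter by realizing the discrete obstacle problem \eqref{eq:s-def-on-lattice} as an approximation of the continuous one \eqref{eq:s-def-on-cont}. Write $u_n$ for the odometer of the divisible sandpile on $\xi_n\Z^d$ started from $\mu_n$, $\gamma_n$ for the associated discrete obstacle solving $\Delta\gamma_n = 1-\mu_n$ on $\xi_n\Z^d$ (a lattice version of $\gamma_c$), and $s_n := \gamma_n - u_n$; by the discussion leading to \eqref{eq:s-def-on-lattice}, $s_n$ is the largest lattice-subharmonic minorant of $\gamma_n$, $u_n \ge 0$, and $D_n$ coincides with $\{\mu_n \ge 1\}\cup\{u_n > 0\}$ apart from a layer of sites having a neighbour that toppled. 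Toppling conserves total mass, so $\sum_{\xi_n\Z^d}\Delta u_n = \sum(\nu_n-\mu_n) = 0$; moreover, a comparison of the sandpile with the one started from a constant density on a large ball shows that all $D_n$, as well as the set $D$ of \eqref{eq:def-set-D-from-obst-prob}, lie in a single fixed ball $B_R$, so each $u_n$ is supported in $B_R$. Since $\Delta u_n = \nu_n - \mu_n$ is bounded uniformly in $n$ and supported in $B_R$, standard discrete potential estimates give a uniform $\|u_n^\square\|_\infty$ bound and a common modulus of continuity for the step functions $u_n^\square$; by Arzel\`a--Ascoli some subsequence of $\{u_n^\square\}$ converges locally uniformly to a continuous, nonnegative, compactly supported function $w$.

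The first analytic input is that $\gamma_n^\square \to \gamma_c$ locally uniformly; this is classical discrete potential theory (the rescaled lattice fundamental solution converges to the Newton kernel, hence the discrete Newtonian potential of $\mu_n$ converges to $N*\mu$, the quadratic part being exact), and I would quote it. It then suffices to identify $w = \gamma_c - s_c =: u_c$, after which independence of the subsequence gives $u_n^\square \to u_c$ locally uniformly for the full sequence. One inequality is cheap: since $\Delta s_n \ge 0$ on the lattice and $s_n^\square \to \sigma := \gamma_c - w$ locally uniformly, a summation-by-parts against smooth test functions gives $\Delta\sigma \ge 0$ in the sense of distributions, so $\sigma$ is continuous and subharmonic; and $\sigma \le \gamma_c$ because $u_n \ge 0$. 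By the extremal definition \eqref{eq:s-def-on-cont} of $s_c$ this forces $\sigma \le s_c$, i.e. $w \ge u_c$.

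For the reverse inequality $w \le u_c$ I would mollify the continuous solution. Fix $\delta > 0$, let $\rho_\delta$ be a standard mollifier and $\omega$ the modulus of continuity of $\gamma_c$, and set $f_\delta := s_c * \rho_\delta - \omega(\delta)$. Then $f_\delta$ is smooth, subharmonic (mollification preserves subharmonicity), and $f_\delta \le \gamma_c * \rho_\delta - \omega(\delta) \le \gamma_c$. Being smooth, $f_\delta^{::}$ has discrete Laplacian $\Delta f_\delta + O(\xi_n^2\|D^4 f_\delta\|_\infty)$, so after subtracting a correction that tends to $0$ with $\xi_n$ (for $\delta$ fixed), $f_\delta^{::}$ becomes a genuine lattice-subharmonic minorant of $\gamma_n$ on $B_R$, while $u_n \equiv 0$ off $B_R$; by extremality of $s_n$ we get $s_n \ge f_\delta^{::} - o_n(1)$, hence $u_n = \gamma_n - s_n \le \gamma_n - f_\delta^{::} + o_n(1)$. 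Letting $n \to \infty$ and then $\delta \to 0$ (using $f_\delta \to s_c$ locally uniformly) yields $w \le \gamma_c - s_c = u_c$. Therefore $w = u_c$, and $u_n^\square \to u_c$ locally uniformly --- equivalently, the discrete obstacle problems converge to \eqref{eq:s-def-on-cont}.

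It remains to pass from $u_n^\square \to u_c$ to the set convergence $D_n \to D \cup \Omega$ in the sense of Section~\ref{sec:prel-main-result}, that is, $(D\cup\Omega)_\epsilon \cap \xi_n\Z^d \subset D_n \subset (D\cup\Omega)^\epsilon$ for every $\epsilon > 0$ and $n$ large. The inclusions that follow from uniform convergence alone are the easy ones: if $x \in \Omega_\epsilon$ then $x^\square \subset \Omega \subset \{\mu \ge 1\}$ for $n$ large, so $\mu_n(x) \ge 1$ and $x \in D_n$; and if $x$ lies in a fixed compact subset of the open set $D = \{u_c > 0\}$, then $u_c(x)$ is bounded below there, so $u_n(x) > 0$ for $n$ large and again $x \in D_n$. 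The outer inclusion $D_n \subset (D\cup\Omega)^\epsilon$, which must rule out occupied sites far from $D \cup \Omega$ where $u_c \equiv 0$ but $u_n$ need only be small rather than identically zero, is the crux; here uniform convergence is not enough, and one needs a non-degeneracy estimate for the odometers (if $x$ lies in $D$, resp.\ $D_n$, at distance $r$ from the complement, then $\sup_{B(x,r)} u_c \gtrsim r^2$, resp.\ the discrete analogue for $u_n$) together with control of the free boundary $\bdry D$. This step --- stability of the free boundary, equivalently non-degeneracy of $u_c$ and $u_n$ --- is where the regularity theory for \eqref{eq:s-def-on-cont}, a mass-balance lower bound for the discrete odometer, and the hypotheses on $\mu$ (boundedness, a.e.\ continuity, $\{\mu \ge 1\} = \overline{\Omega}$) are genuinely used, and I expect it to be the main obstacle; combined with $\|u_n^\square - u_c\|_\infty \to 0$ it squeezes $\{u_n > 0\}$ between the inner and outer $\epsilon$-neighbourhoods of $D$, which with the $\{\mu_n \ge 1\}$-part and the toppled-neighbour layer gives the claim. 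The other two inputs --- convergence of the discrete Newton kernel and the mollification comparison --- are comparatively routine.
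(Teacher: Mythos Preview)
The paper does not give its own proof of this theorem: it is stated, with attribution to Levine's thesis~\cite{levine-07}, purely as background, and no argument is supplied. So there is nothing in the paper to compare your proposal against directly.

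That said, your sketch is essentially the Levine--Peres argument and is broadly correct. It is also close in spirit to what the paper does later, in Lemma~\ref{lemma:conv-of-odometers-in-bdd-scaling-limit}, when proving the analogous odometer convergence for the \emph{generalized} sandpile. There too one first establishes uniform convergence of the discrete potentials to the continuous one, and then sandwiches the discrete obstacle solution against the continuous one by (i) mollifying $V^\sigma$ and adding a quadratic correction $A\xi_n|x|^2/6$ to absorb the $O(\xi_n)$ discrepancy between discrete and continuous Laplacians, producing a competitor in the discrete obstacle problem; and (ii) taking the \emph{continuous} potential of (a cutoff of) $-\Delta v_n$ to produce a competitor in the continuous obstacle problem. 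Two differences worth noting: the paper's argument builds the two inequalities directly rather than passing through Arzel\`a--Ascoli and identifying a subsequential limit as you do; and because the paper is aiming only for distributional convergence of measures, it settles for pointwise convergence of odometers and never needs the non-degeneracy~/ free-boundary stability step that you correctly flag as the crux for set convergence. For the theorem as stated (convergence of the domains $D_n$), that last step is indeed required, and your identification of it as the main difficulty is accurate.
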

We will later on observe that the obstacle problem in
\eqref{eq:s-def-on-cont} is essentially the same obstacle problem as
that occurring in the definition of the partial balayage operation
$\bal(\cdot, m)$ of a mass configuration to the Lebesgue measure
$m$---\ie to density one, if we think of $m$ as a distribution---and
so the limiting set in the above theorem is precisely
\begin{gather*}
  D \cup \Omega = \supp \bal(\mu, m).
\end{gather*}
One consequence of the above is that if we let $\nu_n$ be the result
of the standard divisible sandpile on $\xi_n \Z^d$ started from
density $\mu_n$, but choose to interpret this resulting mass
configuration as a measure on $\R^d$, \ie with some abuse of notation
we let
\begin{align*}
  \nu_n = \xi_n^d \sum_{x \in \xi_n \Z^d} \nu_n(x) \cdot \delta_{x},
\end{align*}
where $\delta_x$ is the Dirac point mass measure at $x$, then
$\nu_n \to \bal(\mu, m)$ in the sense of distributions as
$n \to \infty$. This weak form of convergence is the approach we will
take in the remainder of the paper.

\section{Partial balayage}
\label{sec:partial-balayage}
In this paper we are going to refer to two different variants of
partial balayage: first a bounded version with Dirichlet boundary
conditions, which we are going to relate to a bounded version of the
generalized divisible sandpile algorithm, and also an unrestricted
version when the dimension $d = 2$, which we in turn relate to a the
possible limit of the generalized divisible sandpile in the setting
where the confining radius grows infinitely large.

\subsection{Bounded partial balayage}
\label{sec:partial-balayage-bdd}
The bounded version of partial balayage was developed by
B.~Gust\-afsson and M.~Sakai in~\cite{gustafsson-sakai-94}, which we
include here mainly for sake of completeness and for an overview of
the minor adjustments to the notation we use in this paper. For proofs
we refer to~\cite{gustafsson-sakai-94}, and for a good survey of
partial balayage in general, see for instance \cite{gustafsson-04}.

Before we continue, we need to say a few words about our notation. If
$\mu$ is a signed Radon measure on $\R^d$ with compact support, then
we denote by $U^\mu$ the Newtonian potential of $\mu$. For greater
compatibility with the analogous theory in the discrete setting, we
use the (somewhat non-standard) normalization of the potential such
that
\begin{align*}
  -\Delta U^\mu(x) = 2d \cdot \mu(x),
\end{align*}
which always holds in the sense of distributions (and pointwise
wherever $U^\mu$ is $C^2$). Here $\Delta$ is the usual Laplace
operator
\begin{align*}
  \Delta = \sum_{i=1}^d \frac{\partial^2}{\partial x_i^2},
\end{align*}
with the natural generalization in terms of distributions.

\begin{definition}
\label{def:partial-balayage-def-set}
  Let $\sigma = \sigma_+ - \sigma_-$ be a signed Radon measure on
  $\R^d$ with compact support, and let $R > 0$. Define the set
  \begin{gather*}
    \mathcal{F}^{\sigma,R} := \{V \in \mathcal{D}'(\R^d): V \leq
    U^\sigma \text{ in } \R^d, \Delta V \geq 0 \text{ in } B(0, R)\},
  \end{gather*}
  where $\mathcal{D}'(\R^d)$ is the set of distributions in $\R^d$.
\end{definition}
\begin{theorem}
\label{thm:partial-balayage-bdd-sol-fn-props}
  The set $\mathcal{F}^{\sigma,R}$ in
  Definition~\ref{def:partial-balayage-def-set} contains a largest
  element,
  $V^\sigma \equiv V^{\sigma,R} := \sup \mathcal{F}^{\sigma,R}$. This
  $V^\sigma$ satisfies the complementarity system
  \begin{align*}
      V^\sigma &\leq U^\sigma \text{ in } \R^d, \\
      \Delta V^\sigma &\geq 0 \text{ in } B(0, R), \\
      V^\sigma &= U^\sigma \text{ on } \R^d \setminus B(0, R), \\
      -\Delta V^\sigma &= 0 \text{ in } \omega(\sigma) := \{x \in B(0,R) : V^\sigma(x) < U^\sigma(x)\}.
  \end{align*}
  It follows from the above that $-\Delta V^\sigma$ is a signed Radon
  measure.
\end{theorem}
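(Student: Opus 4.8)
The plan is to construct $V^\sigma$ as the upper envelope of the admissible family $\mathcal{F}^{\sigma,R}$ and then to read the complementarity system off from its maximality. I would begin with two easy reductions. First, if $V\in\mathcal{F}^{\sigma,R}$ then the function agreeing with $V$ on $\overline{B(0,R)}$ and with $U^\sigma$ on $\R^d\setminus\overline{B(0,R)}$ is again in $\mathcal{F}^{\sigma,R}$ — it is unchanged, hence still subharmonic, on the open ball; it is still $\le U^\sigma$ everywhere; and it is still a distribution — and it dominates $V$, so the supremum over $\mathcal{F}^{\sigma,R}$ is unchanged if we discard all competitors not already equal to $U^\sigma$ off $\overline{B(0,R)}$; this will give $V^\sigma=U^\sigma$ on $\R^d\setminus B(0,R)$ (equality across $\partial B(0,R)$ itself requiring a short boundary-regularity remark). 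Second, $\mathcal{F}^{\sigma,R}$ is nonempty — for $d\ge 3$ the globally subharmonic function $-U^{\sigma_-}$ lies in it, since $U^{\sigma_+}\ge 0$ forces $-U^{\sigma_-}\le U^\sigma$, and a slightly more careful explicit competitor handles $d=2$ — it is stable under pairwise maxima among these normalized competitors, and it is locally uniformly bounded above in $B(0,R)$, because for $\overline{B(x_0,r)}\subset B(0,R)$ the sub-mean-value inequality bounds $V(x_0)$ by the mean of $U^\sigma$ over $B(x_0,r)$, which is finite and independent of $V$ since $U^\sigma\in L^1_{\mathrm{loc}}$.

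With these preliminaries, I would let $V^\sigma$ be the upper-semicontinuous regularization of $\sup\{V:V\in\mathcal{F}^{\sigma,R}\}$ in $B(0,R)$ and $U^\sigma$ outside. Using the sublattice property and the local boundedness, Choquet's lemma gives a countable subfamily whose running maxima form an increasing sequence of subharmonic functions, and the classical theory of such envelopes then shows that the regularization is subharmonic in $B(0,R)$ — so $\Delta V^\sigma\ge 0$ there — and that it differs from the plain supremum only on a polar set. Since every competitor satisfies $V\le U^\sigma$ almost everywhere, so does the supremum, hence so does $V^\sigma$ (polar sets being Lebesgue-null), which is exactly $V^\sigma\le U^\sigma$ in $\mathcal{D}'(\R^d)$ because $U^\sigma-V^\sigma\in L^1_{\mathrm{loc}}$. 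Therefore $V^\sigma\in\mathcal{F}^{\sigma,R}$, and, dominating every competitor, it is the largest element, so $V^\sigma=\sup\mathcal{F}^{\sigma,R}$. Its distributional Laplacian is a positive Radon measure in $B(0,R)$ by the Riesz representation of positive distributions, and equals $\Delta U^\sigma=-2d\sigma$ on $\R^d\setminus\overline{B(0,R)}$; a routine patching argument across $\partial B(0,R)$ then identifies $-\Delta V^\sigma$ as a signed Radon measure on all of $\R^d$.

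The remaining relation $-\Delta V^\sigma=0$ in $\omega(\sigma)=\{x\in B(0,R):V^\sigma(x)<U^\sigma(x)\}$ — equivalently, that the measure $\Delta V^\sigma$ charges no part of $\omega(\sigma)$ — is the classical statement that the obstacle-problem solution is harmonic off the coincidence set, and I expect it to be the main obstacle. The mechanism is an argument by contradiction with maximality: were $\Delta V^\sigma$ to put positive mass on a compact subset of $\omega(\sigma)$, one could enlarge $V^\sigma$ locally — by a harmonic (Poisson) replacement on a small ball, truncated if necessary so as to be absorbed by the strictly positive gap $U^\sigma-V^\sigma$ available on $\omega(\sigma)$ — and obtain a function still in $\mathcal{F}^{\sigma,R}$ but strictly exceeding $V^\sigma$ somewhere, contradicting maximality. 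The real difficulty is in carrying this out in full generality: the obstacle $U^\sigma$ is only a difference of two lower-semicontinuous potentials and need not be continuous or even lower-semicontinuous, and $\omega(\sigma)$ need not be open, so one must localize inside the level sets $\{U^\sigma-V^\sigma>\epsilon\}$, arrange the enlarged function (e.g. by choosing the ball to avoid $\supp\sigma_-$, where $U^\sigma$ is superharmonic and the minimum principle applies) to stay below $U^\sigma$, let $\epsilon\downarrow 0$, and reconcile the pointwise, almost-everywhere and quasi-everywhere statements along the way. Assembling the four displayed relations then yields the complementarity system, and the final assertion that $-\Delta V^\sigma$ is a signed Radon measure has already been obtained.
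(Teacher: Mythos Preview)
The paper does not actually prove this theorem. Immediately before the statement the authors write that the bounded partial balayage was developed by Gustafsson and Sakai and that ``For proofs we refer to~\cite{gustafsson-sakai-94}''; the theorem is quoted for completeness and notational convenience only. So there is no in-paper argument to compare your proposal against.

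That said, your outline is the standard route to such results and is, in substance, what one finds in the obstacle-problem literature and in~\cite{gustafsson-sakai-94}: normalize competitors to agree with $U^\sigma$ off $\overline{B(0,R)}$, exploit the sublattice property and local upper bounds to apply Choquet's lemma, take the upper-semicontinuous regularization of the pointwise supremum to land back in $\mathcal{F}^{\sigma,R}$, and then deduce harmonicity on the non-coincidence set from maximality via a local Poisson replacement. Your flagged difficulties---that $U^\sigma$ need not be continuous and that $\omega(\sigma)$ need not be open---are exactly the places where the general argument requires care, and where the cited reference carries the weight; your sketch correctly identifies the mechanism (working inside $\{U^\sigma-V^\sigma>\epsilon\}$ and letting $\epsilon\downarrow 0$) without fully executing it.
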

\begin{definition}
  \label{def:partial-balayage-bdd}
  The \emph{partial balayage} relative to the ball $B(0,R)$, where
  $R > 0$, of a signed Radon measure $\sigma = \sigma_+ - \sigma_-$
  with compact support is defined to be the signed Radon measure
  \begin{gather*}
    \bal_R(\sigma, 0) := -\frac{1}{2d}\Delta V^{\sigma, R},
  \end{gather*}
  where $V^{\sigma, R}$ is as in
  Theorem~\ref{thm:partial-balayage-bdd-sol-fn-props}.
\end{definition}

\begin{remark}
  \label{remark:mod-schwarz-potential}
  The \emph{modified Schwarz potential} of the above problem is the
  function $u \equiv u^{\sigma, R} := U^\sigma - V^{\sigma, R}$. In
  terms of the complementarity system in
  Theorem~\ref{thm:partial-balayage-bdd-sol-fn-props}, $u$ satisfies
  \begin{align*}
    u &\geq 0 \text{ in } \R^d, \\
    \Delta u &\geq 2d \cdot \sigma \text{ in } B(0, R), \\
    u &= 0 \text{ on } \R^d \setminus B(0, R), \\
    u &= 2d \cdot \sigma \text{ in } \omega(\sigma) := \{x \in B(0,R) : u(x) > 0\}.
  \end{align*}
  For the partial balayage measure in terms of $u$, we see from
  Definition~\ref{def:partial-balayage-bdd} that $\bal_R(\sigma, 0)$
  is given by
  \begin{gather*}
    \bal_R(\sigma, 0) = -\frac{1}{2d}\Delta (U^\sigma - u) = \sigma +
    \frac{1}{2d} \Delta u.
  \end{gather*}
\end{remark}

\begin{remark}
  \label{rem:def-of-bdd-bal-mu-to-lambda}
  In \cite{gustafsson-sakai-94}, and several other articles, the
  partial balayage operation is often discussed in terms of
  $\nu := \bal_R(\mu, \lambda)$, where $\mu$ and $\lambda$ are
  suitable (positive) measures. This resulting (also positive) measure
  $\nu$ then satisfies $\nu \leq \lambda$ in $B(0,R)$, and $U^\nu$
  being equal to $U^\mu$ (\ie $\nu$ and $\mu$ are ``graviequivalent'')
  outside of some \emph{a priori} unknown set $\Omega$. At least in
  the finite energy setting, this $\nu$ is the unique minimizer of the
  energy norm difference
  \begin{align*}
    I[\mu - \nu] = \int U^{\mu - \nu} \d (\mu - \nu)
  \end{align*}
  over all $\nu$ satisfying $\nu \leq \lambda$ in $B(0,R)$ (and, at
  least in two dimensions, with the extra condition that $\nu$ has the
  same total mass as $\mu$).

  In this paper, we will mostly focus on partial balayage measures of
  the form $\bal_R(\cdot, 0)$, as defined in
  Definition~\ref{def:partial-balayage-bdd}. At times when we need to
  refer to partial balayage measures of the form
  $\bal_R(\cdot, \lambda)$ instead, we utilize a well-known
  translational invariance property of partial balayage (see
  \cite{gustafsson-sakai-94}), in that, for suitable measures $\mu$,
  $\lambda$ and $\eta$ to ensure existence,
  \begin{align}
    \label{eq:pb-translational-invariance}
    \bal_R(\mu + \eta, \lambda + \eta) = \bal_R(\mu, \lambda) + \eta.
  \end{align}
  In other words, when appropriate we simply think of
  $\bal_R(\mu, \lambda)$ as the measure defined by
  \begin{align*}
    \bal_R(\mu, \lambda) = \bal_R(\mu - \lambda, 0) + \lambda.
  \end{align*}
\end{remark}

\subsection{Unrestricted partial balayage in the plane}
\label{sec:unrestr-part-balayage}
In the plane it is known that
Definition~\ref{def:partial-balayage-bdd}, under suitable assumptions
on the signed measure $\sigma$, can be generalized to allow for an
infinite confining radius. See~\cite{roos-15} for details, and for
recently developed connections between partial balayage measures and
equilibrium measures in weighted potential
theory~\cite{balogh-harnad-09,saff-totik-97}.
\begin{definition}
  \label{def:unbdd-partial-balayage-def-set}
  Let $\sigma = \sigma_+ - \sigma_-$ be a signed Radon measure on
  $\R^2$ with compact support. Define the set
  \begin{gather*}
    \mathcal{F}^{\sigma} := \{V \in \mathcal{D}'(\R^d): V \leq
    U^\sigma \text{ in } \R^2, \Delta V \geq 0 \text{ in } \R^2\}.
  \end{gather*}
\end{definition}
\begin{theorem}
  \label{thm:partial-balayage-unbdd-sol-fn-props}
  If $\sigma = \sigma_+ - \sigma_-$ is a signed Radon measure on
  $\R^2$ with compact support and negative total mass, with the
  property that $U^{\sigma_-}$ is a continuous function on $\R^2$,
  then $\mathcal{F}^\sigma$ is non-empty and contains its largest
  element, ${V^\sigma := \sup \mathcal{F}^\sigma}$. This $V^\sigma$
  satisfies the complementarity system
  \begin{align*}
    V^\sigma &\leq U^\sigma \text{ in } \R^2, \\
    \Delta V^\sigma &\geq 0 \text{ in } \R^2, \\
    V^\sigma &= U^\sigma \text{ in } \supp \Delta V^\sigma \subset
               \supp \sigma_-, \\
    -\Delta V^\sigma &= 0 \text{ in } \omega(\sigma) := \{x \in \R^2 :
                       V^\sigma(x) < U^\sigma(x)\}.
  \end{align*}
\end{theorem}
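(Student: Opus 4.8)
The plan is to prove Theorem~\ref{thm:partial-balayage-unbdd-sol-fn-props} by mirroring, in the unrestricted planar setting, the classical argument for the bounded case (Theorem~\ref{thm:partial-balayage-bdd-sol-fn-props}), with the crucial difference that the confining ball $B(0,R)$ is replaced by all of $\R^2$, so one must work harder to rule out the degenerate possibility that $\sup \mathcal{F}^\sigma$ escapes to $-\infty$ or fails to be locally bounded. The key structural facts are the logarithmic growth of the planar Newton kernel and the hypotheses: the negative total mass $\sigma(\R^2) =: -t < 0$ ensures $U^\sigma(x) = 2d\cdot t \cdot \log|x| + O(1)$ (in our normalization $-\Delta U^\mu = 2d\mu$, so $U^{\delta_0} = -(2d/2\pi)\log|x|$, whence a negative measure of total mass $-t$ gives a potential growing like $+$const$\cdot t \log|x|$ at infinity), and the continuity of $U^{\sigma_-}$ guarantees $U^\sigma$ is lower semicontinuous and locally bounded below away from $\supp\sigma_+$.

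First I would establish that $\mathcal{F}^\sigma$ is non-empty. The natural candidate is the Newtonian potential $U^{\sigma_-}$ of the negative part alone: it is subharmonic on $\R^2$ (indeed harmonic off $\supp\sigma_-$, and $-\Delta U^{\sigma_-} = 2d\sigma_- \geq 0$... wait, I need $\Delta V \geq 0$, i.e.\ $V$ subharmonic, so I want $\Delta U^{\sigma_-} = -2d\sigma_- \leq 0$ — that is superharmonic, the wrong sign), so instead I would use a constant, or more robustly a large negative multiple of $-U^{\delta_0}$ truncated appropriately; the cleanest choice is to take $V_0 \equiv -C$ for $C$ large and check $-C \leq U^\sigma$ everywhere, which holds precisely because $U^\sigma \to +\infty$ at infinity (negative total mass) and $U^\sigma$ is bounded below on compact sets by the continuity of $U^{\sigma_-}$ and the fact that $U^{\sigma_+} \geq 0$. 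This handles non-emptiness. Next, $V^\sigma := \sup \mathcal{F}^\sigma$ is bounded above by $U^\sigma$ by definition; I would show it is locally bounded below and that its upper semicontinuous regularization $(V^\sigma)^*$ still lies in $\mathcal{F}^\sigma$ — this is the standard fact that the (regularized) supremum of a family of subharmonic functions, locally bounded above, is subharmonic, combined with the observation that the constraint $V \leq U^\sigma$ with $U^\sigma$ lower semicontinuous passes to the regularized sup. Then $V^\sigma \in \mathcal{F}^\sigma$ and is its largest element.

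With $V^\sigma$ in hand, the complementarity system follows as in the bounded case. That $V^\sigma \leq U^\sigma$ and $\Delta V^\sigma \geq 0$ in $\R^2$ are immediate from membership in $\mathcal{F}^\sigma$. On the noncoincidence set $\omega(\sigma) = \{V^\sigma < U^\sigma\}$ one argues that $V^\sigma$ is harmonic: if not, one could locally replace $V^\sigma$ by its harmonic lift (Perron modification) on a small ball where $V^\sigma < U^\sigma$ still holds by lower semicontinuity of $U^\sigma - V^\sigma$, producing a larger competitor and contradicting maximality; hence $-\Delta V^\sigma = 0$ there. It remains to show $\supp \Delta V^\sigma \subset \supp \sigma_-$, equivalently that $\Delta V^\sigma = \Delta U^\sigma$ (so $V^\sigma = U^\sigma$ up to a harmonic function, and by the coincidence one gets equality) on the complement of $\supp\sigma_-$: off $\supp\sigma_-$ we have $\Delta U^\sigma = -2d\sigma_+ \leq 0$, while $\Delta V^\sigma \geq 0$; since $U^\sigma - V^\sigma \geq 0$ is superharmonic there... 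I would compare: on $\R^2 \setminus \supp\sigma_-$, $V^\sigma$ is subharmonic and $V^\sigma \leq U^\sigma$; on $\supp\sigma_+ \setminus \supp\sigma_-$, $U^\sigma$ itself could be modified upward — but the point is that at any point where $V^\sigma < U^\sigma$ we showed $V^\sigma$ is harmonic, and at points where $V^\sigma = U^\sigma$ off $\supp\sigma_-$, a local argument shows $\Delta V^\sigma$ cannot have mass there because $U^\sigma$ is there equal to a (super)harmonic-plus-subharmonic decomposition with no negative mass. This containment step is the main obstacle: one must carefully track where the Riesz mass of $V^\sigma$ can sit, and the clean way is to note $\bal_{\R^2}(\sigma,0) := -\tfrac{1}{2d}\Delta V^\sigma$ is a positive measure (since $V^\sigma$ subharmonic) whose potential equals $U^\sigma$ outside $\omega(\sigma)$, invoke the domination principle for logarithmic potentials in the plane (valid because of the growth control from negative total mass, which prevents the usual failure of the maximum principle for $\log$ potentials), and conclude that the balayage measure is supported where the obstacle is "active," which forces $\supp\Delta V^\sigma \subset \supp\sigma_-$. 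I would cite \cite{roos-15} for the planar domination/growth estimates and \cite{gustafsson-sakai-94} for the local Perron-modification arguments, adapting them to $R = \infty$.
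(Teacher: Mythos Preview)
The paper does not actually prove Theorem~\ref{thm:partial-balayage-unbdd-sol-fn-props}; it is quoted from \cite{roos-15} (see the sentence preceding Definition~\ref{def:unbdd-partial-balayage-def-set}). So there is no in-paper proof to compare against, and your proposal must be judged on its own merits.

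Your overall strategy is the right one and would succeed, but two points need correction. First, in your non-emptiness argument you justify the global lower bound for $U^\sigma$ by ``the fact that $U^{\sigma_+} \geq 0$''. In dimension two this is false: the logarithmic kernel changes sign, so the potential of a positive compactly supported measure is \emph{not} nonnegative (indeed it tends to $-\infty$ at infinity). What is true, and is all you need, is that $U^{\sigma_+}$ is bounded below on any compact set (immediate from a crude bound on $\log(1/|x-y|)$), while the growth $U^\sigma \to +\infty$ at infinity comes from the \emph{combined} asymptotic $U^\sigma(x) \sim c\,t\log|x|$ with $t = -\sigma(\R^2) > 0$. With that fix, the constant competitor $V_0 \equiv -C$ works.

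Second, and more substantially, your argument for $\supp \Delta V^\sigma \subset \supp \sigma_-$ is hand-wavy: you invoke an unspecified ``domination principle'' and gesture at where the Riesz mass ``can sit''. The clean and self-contained argument is another Perron modification. On any open ball $B$ disjoint from $\supp\sigma_-$, the obstacle $U^\sigma$ is \emph{superharmonic} (since $\Delta U^\sigma = -2d\sigma_+ \leq 0$ there). Replace $V^\sigma$ in $B$ by the harmonic function with the same boundary values; the resulting function is still globally subharmonic, is $\geq V^\sigma$, and in $B$ lies below $U^\sigma$ by the maximum principle applied to the subharmonic difference (harmonic minus superharmonic) which is $\leq 0$ on $\partial B$. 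Maximality of $V^\sigma$ then forces $V^\sigma$ to already be harmonic in $B$, so $\Delta V^\sigma$ has no mass there. This yields the containment directly, without any appeal to a domination principle. The remaining assertion $V^\sigma = U^\sigma$ on $\supp\Delta V^\sigma$ then follows immediately, since you already showed $\Delta V^\sigma = 0$ on the non-coincidence set $\omega(\sigma)$.
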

\begin{definition}
  \label{def:partial-balayage-unbdd}
  The \emph{(unrestricted) partial balayage} of a signed Radon measure
  $\sigma = \sigma_+ - \sigma_-$ with compact support, assumed to
  satisfy $U^{\sigma_-}$ continuous everywhere on $\R^2$, is defined
  to be the signed Radon measure
  \begin{gather*}
    \bal(\sigma, 0) := -\frac{1}{2d}\Delta V^{\sigma},
  \end{gather*}
  where $V^{\sigma}$ is as in
  Theorem~\ref{thm:partial-balayage-unbdd-sol-fn-props}.
\end{definition}

\section{Generalizing the divisible sandpile}
As mentioned earlier, the scaling limit of the standard divisible
sandpile obtained in L.~Levine's thesis~\cite{levine-07} is related to
taking partial balayage of a mass configuration to the Lebesgue
measure $m$, \ie $\bal_R(\mu, 1) \equiv \bal_R(\mu, m)$, where $R > 0$
is a large enough bounding radius.

However, as we saw in Section~\ref{sec:partial-balayage}, there is
mathematically no problem in calculating the partial balayage of a
mass configuration relative to a more general measure than the
Lebesgue measure, \ie instead calculating $\bal_R(\mu, \lambda)$,
where $\lambda$ is a measure that, in a sense, describes the maximal
density that will be allowed for the final mass configuration. It is
therefore a natural question to ask if the standard divisible sandpile
model in~\cite{levine-07} can be generalized to incorporate this
measure $\lambda$, in such a way that the corresponding scaling limit
of this modified particle aggregation model coincides with
$\bal_R(\mu, \lambda)$.

In this section we shall see that this is, indeed, possible. With the
translational invariance \eqref{eq:pb-translational-invariance} in
mind, we will, mainly for sake of simplicity in the formulation,
actually develop a generalized sandpile model that converges to
measures of the form $\bal_R(\cdot, 0)$ in the appropriate scaling
limit. If desired, this can then readily be reformulated into a
corresponding result in terms of $\bal_R(\mu, \lambda)$.

\subsection{Bounded divisible sandpile for signed mass configurations
  on a fixed lattice}
Let $\sigma: \xi \Z^d \to \R$ be a bounded function on the lattice
$\xi \Z^d$ for some lattice constant $\xi > 0$; this function
will be our generalization of the initial mass configuration. We
shall always assume that $\sigma$ has compact support
\begin{align*}
  \supp \sigma := (\supp \sigma_+) \cup (\supp \sigma_-),
\end{align*}
where $\sigma_+ = \max(\sigma, 0)$ and $\sigma_- = - \min(\sigma, 0)$;
a bounded lattice function of compact support will for sake of brevity
be called a \emph{generalized mass configuration}. We are only going
to be interested in \emph{admissible} generalized mass configurations,
by which we mean
\begin{align}
  \label{eq:gds-compatibility-req}
  \sum_{x \in \xi \Z^d} \sigma_-(x) \geq \sum_{x \in \xi\Z^d}
  \sigma_+(x).
\end{align}
Much like we in the standard divisible sandpile model ended up with a
mass configuration satisfying $\nu \leq 1$ everywhere, we will, in our
generalized divisible sandpile, in the end obtain a generalized mass
configuration $\nu$ satisfying $\nu \leq 0$ everywhere. Since we want
the \emph{total} mass of our mass configuration to remain the same
throughout this process, so that $\sum_{x \in \xi\Z^d} \nu(x) =
\sum_{x \in \xi\Z^d} \sigma(x)$, this explains requirement
\eqref{eq:gds-compatibility-req}, as we then have
\begin{gather*}
  \sum_{x \in \xi\Z^d} \sigma(x) = \sum_{x \in \Z^d}
  \underbrace{\nu(x)}_{\leq 0} \leq 0.
\end{gather*}
The main way we will generalize the divisible sandpile model is by
generalizing the toppling step described in
Section~\ref{sec:prel-main-result} for the standard divisible
sandpile. In the standard model, at every site $x$ where the mass $M$
exceeds one, we redefine our mass configuration locally around $x$,
leaving a mass of one at $x$ and spreading the remaining mass $M - 1$
equally amongst the nearest neighbours of $x$. We here essentially do
more or less the same, with the difference that we instead look for
sites where $\sigma$ is positive (\ie violating the desired property
of the mass configuration being $\leq 0$ everywhere). Thus, for every
site $x$ in our lattice where we have $\sigma_+(x) > 0$ we modify our
mass configuration around $x$, leaving no mass at all at $x$ (so that
$\nu \leq 0$ at least is satisfied at $x$ for our new mass
configuration $\nu$), and relocate the remaining mass $\sigma_+(x)$
equally amongst the $2d$ neighbouring sites of $x$.

To formalize the above we do the following: consider $x \in
\xi\Z^d$ arbitrary but for the moment fixed, let $\eta: \xi\Z^d
\to \R$ be some generalized mass configuration and define toppling of
$\eta$ at the site $x$ to be the mass configuration $T_x \eta$ defined
by
\begin{align}
  \label{eq:def-gen-toppling}
  T_x \eta(y) := \eta(y) + \eta_+(x) \xi^2 \cdot \Delta \delta_x(y),
\end{align}
where $\delta_x$ is the (discrete) delta function at $x$, and $\Delta$
is the (for our purposes suitably normalized) discrete Laplace
operator defined by
\begin{align}
  \label{eq:def-discr-Laplacian}
  \Delta f(y) = \frac{1}{2d \xi^2} \sum_{y' \sim y}
  \left( f(y') - f(y) \right),
\end{align}
where $y' \sim y$ means $y' \in \xi\Z^d$ is one of the $2d$
neighbouring points of distance $\xi$ from $y$ in $\xi\Z^d$. If $x$
happens to be a lattice point for which $\eta(x) \leq 0$ holds, then
clearly $\eta_+(x) = 0$, hence $T_x \eta(y) = \eta(y)$ holds for
every $y \in \xi\Z^d$, as desired. If we on the other hand happen to
have $\eta(x) = \eta_+(x) > 0$ then we get a contribution from the
second term in \eqref{eq:def-gen-toppling} and need to calculate
$\Delta \delta_x(y)$ to determine what $T_x \eta(y)$ is. From
\eqref{eq:def-discr-Laplacian} we obtain
\begin{align*}
  \Delta \delta_x(y) = \frac{1}{2d\xi^2} \sum_{y' \sim y} \left(
    \delta_x(y') - \delta_x(y) \right),
\end{align*}
and see that this function obtains different values depending on how
close $y$ is to $x$. If $y = x$, then $\delta_x(y) = 1$ and
$\delta_x(y') = 0$ for every $y' \sim y = x$, from which it follows
that
\begin{align*}
  \Delta \delta_x(x) = \frac{1}{2d\xi^2} \sum_{y' \sim y} \left(
    0 - 1 \right) = - \frac{1}{2d\xi^2} \sum_{y' \sim y} 1 = -
  \frac{2d}{2d\xi^2} = -\frac{1}{\xi^2}.
\end{align*}
If $y$ instead is a neighbouring point of $x$, then $x$ is a
neighbouring point of $y$ (naturally), so $\delta_x(y')$ will be zero
for every $y' \sim y$ except for when $y' = x$. Clearly we then also
have $\delta_x(y) = 0$ as $y \neq x$, so we in this case instead
obtain
\begin{align*}
  \Delta \delta_x(y) = \frac{1}{2d\xi^2} \sum_{y' \sim y} \left(
    \delta_x(y') - 0 \right) = \frac{1}{2d\xi^2}.
\end{align*}
Finally, if $y$ is neither equal to $x$ nor a neighbouring point of
$x$, then $\delta_x(y')$ is zero for every $y' \sim y$ and evidently
also $\delta_x(y) = 0$, yielding $\Delta \delta_x(y) = 0$. We
summarize these cases into
\begin{align*}
  \Delta \delta_x(y) = \left\{
      \begin{array}{l l}
        \displaystyle -\frac{1}{\xi^2} & \text{if } y = x, \\[.4cm]
        \displaystyle \frac{1}{2d\xi^2} & \text{if } y \sim x, \\[.4cm]
        0 & \text{otherwise.}
      \end{array}
    \right.
\end{align*}
This yields that we obtain
\begin{align*}
  T_x \eta(y) &= \left\{
      \begin{array}{l l}
        \displaystyle \eta(x) + \eta_+(x) \xi^2 \cdot (-\frac{1}{\xi^2}) & \text{if } y = x, \\[.4cm]
        \displaystyle \eta(y) + \eta_+(x) \xi^2 \cdot \frac{1}{2d \xi^2} & \text{if } y \sim x, \\[.4cm]
        \eta(y) & \text{otherwise;}
      \end{array}
    \right. \\
    &= \left\{
      \begin{array}{l l}
        -\eta_-(x) & \text{if } y = x, \\[.2cm]
        \displaystyle \eta(y) + \frac{\eta_+(x)}{2d} & \text{if } y \sim x, \\[.4cm]
        \eta(y) & \text{otherwise.}
      \end{array}
    \right.
\end{align*}
We see that this way of defining the toppling agrees precisely with
how we want to modify the mass configuration if $x$ is a site where
the mass configuration has a violating positive mass.

Naturally, the site $x$ need not be the only site in $\xi\Z^d$ where
the initial mass configuration possibly is in violation of the desired
nonpositivity, and we also note that as we perform the above toppling
at $x$ we could in fact turn some of the neighbouring points of $x$
into violating points if we add too much mass to these points during
the toppling process. To ensure that we in the end obtain a mass
configuration $\nu$ which satisfies $\nu \leq 0$ \emph{everywhere},
and not only at specific points, we therefore need to do this toppling
procedure over all violating points and repeat when necessary. To
avoid problems with mass possibly escaping to infinity, we will in
this section treat a \emph{bounded} generalization of the divisible
sandpile, \ie fix some $R > 0$ and restrict our study for the moment
to the set $\hat{B}_R := B(0, R) \cap \xi\Z^d$, where
$B(a, r) \subset \R^d$ is the open ball centred at $a \in \R^d$ of
radius $r > 0$; we choose $R$ large enough so that $\hat{B}_R$
contains the support of our initial mass configuration. Now fix a
sequence $x_1, x_2, \ldots$ of points of $\hat{B}_R$ with the property
that if $x \in \hat{B}_R$ is arbitrary, then there are infinitely many
points in the sequence $x_1, x_2, \ldots$ for which $x_k = x$; we call
such a sequence an \emph{infinitely covering sequence} (of
$\hat{B}_R$). For $k \geq 1$ we define the mass configuration
$\sigma_k^R \equiv \sigma_k$ to be the mass configuration obtained
from $\sigma$ after successive toppling of the sites
$x_1, x_2, \ldots, x_k$, \ie we let
\begin{align*}
  \sigma_k(y) := T_{x_k} T_{x_{k-1}} \ldots T_{x_2} T_{x_1} \sigma
  (y).
\end{align*}
Also, for each $k \geq 1$ we define the \emph{$k$th odometer function}
$u_k : \xi\Z^d \to \R_+$ to be $\xi^2$ times the total mass emitted
from the site $x$ after toppling the sites $x_1, x_2, \ldots,
x_k$. These odometer functions are, as already seen in the
introduction, highly useful when studying what happens to the mass
configuration as $k$ tends to infinity.

For any subset $S \subset \xi\Z^d$ of the lattice, we will by
$\partial S$ denote the outer boundary of $S$, defined by
\begin{align*}
  \partial S := \{y \notin S: \text{ there exists } y' \in S \text{
    with } y \sim y'\};
\end{align*}
note that we by definition always have
$S \cap \partial S = \emptyset$. Our first main result is the
following:
\begin{proposition}
  \label{prop:bounded-gds}
  Let $\sigma : \xi\Z^d \to \R$ be a generalized mass
  configuration, let $R > 0$ be such that $\supp \sigma \subset
  \hat{B}_R := B(0, R) \cap \xi\Z^d$ and let $x_1, x_2, x_3,
  \ldots$ be an infinitely covering sequence of $\hat{B}_R$. For each
  $k \geq 1$ let $\sigma_k$ be the generalized mass configuration
  obtained from $\sigma$ after toppling the $k$ points $x_1, \ldots,
  x_k$, and let $u_k$ be the corresponding odometer function.

  Then there exists a generalized mass configuration $\nu$ on
  $\xi\Z^d$ and a function $u : \xi\Z^d \to \R_+$ such that
  $\sigma_k(x) \to \nu(x)$ and $u_k(x) \nearrow u(x)$ for every $x \in
  \xi\Z^d$ as $k \to \infty$. Moreover, $\nu = \nu_+ - \nu_-$ has
  the structure $\supp \nu_+ \subset \bdry \hat{B}_R$ and $\supp \nu_-
  \subset \supp \sigma_-$, so that $\nu \geq 0$ on $\bdry \hat{B}_R$
  and $\nu \leq 0$ on $\hat{B}_R$.
\end{proposition}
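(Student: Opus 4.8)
The plan is to reduce everything to a uniform upper bound on the odometer functions $u_k$ and then read off the limit. The first observation is the discrete analogue of \eqref{eq:Lapl-u-diff-mass-confs}: toppling $x_j$ at step $j$ emits total mass $(\sigma_{j-1})_+(x_j)$ and, by \eqref{eq:def-gen-toppling}, changes the configuration by adding $\xi^2(\sigma_{j-1})_+(x_j)\,\Delta\delta_{x_j}$, so summing over $j\le k$ and using linearity of $\Delta$ gives
\begin{align*}
  \sigma_k=\sigma+\Delta u_k \qquad\text{on }\xi\Z^d.
\end{align*}
Since a toppling never emits negative mass, $u_k\nearrow u$ pointwise with $u(x)\in[0,+\infty]$ a priori, and $u_k$ (hence $u$) vanishes outside $\hat B_R$ because only points of $\hat B_R$ are ever toppled. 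I would also record the elementary fact that if $\sigma(x)\ge0$ then $\sigma_k(x)\ge0$ for \emph{every} $k$: this holds for $k=0$; a toppling at a neighbour of $x$ can only increase $\sigma_k(x)$; and a toppling at $x$ replaces $\sigma_{k-1}(x)$ by $\min(\sigma_{k-1}(x),0)$, which equals $0$ as long as $\sigma_{k-1}(x)\ge0$. The claim follows by induction.

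The heart of the matter is proving $u<\infty$. For this I would produce a fixed super-solution: let $w:\xi\Z^d\to\Rp$ vanish outside $\hat B_R$ and satisfy $\sigma+\Delta w\le0$ on $\hat B_R$, e.g.\ $w(x):=\sum_{y\in\hat B_R}G_R(x,y)\,\sigma_+(y)$, where $G_R\ge0$ is the Dirichlet Green function of $-\Delta$ on the finite set $\hat B_R$ (it exists since $\hat B_R$ is finite and every connected component meets $\bdry\hat B_R$), so that $-\Delta w=\sigma_+$ and $\sigma+\Delta w=-\sigma_-\le0$ on $\hat B_R$. I then claim $u_k\le w$ for all $k$, by induction: assuming $u_{k-1}\le w$, only $u_k(x_k)=u_{k-1}(x_k)+\xi^2(\sigma_{k-1})_+(x_k)$ changes, and replacing $u_{k-1}$ by $w$ at the neighbours of $x_k\in\hat B_R$ gives
\begin{align*}
  \sigma(x_k)+\Delta u_{k-1}(x_k)
  &\le \sigma(x_k)+\Delta w(x_k)+\frac{1}{\xi^2}\bigl(w(x_k)-u_{k-1}(x_k)\bigr)\\
  &\le \frac{1}{\xi^2}\bigl(w(x_k)-u_{k-1}(x_k)\bigr),
\end{align*}
so that $(\sigma_{k-1})_+(x_k)\le\frac{1}{\xi^2}(w(x_k)-u_{k-1}(x_k))$ and hence $u_k(x_k)\le w(x_k)$: the emitted mass cannot overshoot $w$. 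Thus $u:=\lim_k u_k$ satisfies $0\le u\le w<\infty$, and consequently $\sigma_k=\sigma+\Delta u_k\to\sigma+\Delta u=:\nu$ pointwise, $\nu$ being a bounded lattice function supported in $\hat B_R\cup\bdry\hat B_R$.

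It remains to pin down the sign structure of $\nu$. For $x\in\hat B_R$ the infinitely covering property yields infinitely many $k$ with $x_k=x$, and right after such a toppling $\sigma_k(x)=-(\sigma_{k-1})_-(x)\le0$ by \eqref{eq:def-gen-toppling}; passing to the limit along that subsequence gives $\nu(x)\le0$, so $\nu\le0$ on $\hat B_R$. If $x\notin\hat B_R$ then $\sigma(x)=0$, while $u_k(x)=0$ and $u_k\ge0$ everywhere, so $\sigma_k(x)=\Delta u_k(x)\ge0$ and hence $\nu(x)\ge0$; moreover $\nu\equiv0$ outside $\hat B_R\cup\bdry\hat B_R$, since there $u_k$ vanishes together with all its neighbours. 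This gives $\supp\nu_+\subset\bdry\hat B_R$. Finally, by the elementary fact recorded in the first paragraph, $\sigma(x)\ge0$ forces $\sigma_k(x)\ge0$ for all $k$ and hence $\nu(x)\ge0$; so $\nu(x)<0$ can occur only where $\sigma(x)<0$, i.e.\ $\supp\nu_-\subset\supp\sigma_-$.

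The single real obstacle here is the uniform bound $u_k\le w$---ruling out that mass accumulates without bound before it reaches $\bdry\hat B_R$---which is precisely what the choice of the Green potential $w$ and the ``no overshoot'' estimate achieve; once that is in hand, the convergence and the three sign statements are routine bookkeeping on the finite set $\hat B_R\cup\bdry\hat B_R$.
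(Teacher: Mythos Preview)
Your argument is correct, but the route to the key bound on the odometers is genuinely different from the paper's. The paper bounds $\sum_x u_k(x)$ globally by introducing the quadratic weight $Q_k=\sum_x\sigma_k(x)|x|^2$, checking that $Q_k-Q_{k-1}=\xi^2(\sigma_{k-1})_+(x_k)$ and that $Q_k$ stays in $[-(R+2)^2M_-,(R+2)^2M_+]$ via the monotonicity $\sum_x(\sigma_k)_\pm(x)\le\sum_x(\sigma_{k-1})_\pm(x)$; this gives an $L^1$ bound $\sum_x u_k(x)\le (R+2)^2M$. You instead produce a \emph{pointwise} super-solution $w$ via the Dirichlet Green function of $\hat B_R$ and run the ``no-overshoot'' induction $u_k\le w$. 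Your approach is closer in spirit to the obstacle-problem characterisation that comes next (Proposition~\ref{prop:gds-as-obst-problem}) and gives a sharper pointwise bound; the paper's approach is more elementary in that it avoids the Green function entirely and yields the $L^1$ estimate, which is what one naturally gets from a Lyapunov-type quantity. For the structure of $\nu$, the two proofs are essentially the same, except that for $\supp\nu_-\subset\supp\sigma_-$ the paper uses the pointwise estimate $(\sigma_k)_-(x)\le(\sigma_{k-1})_-(x)$, while you use the equivalent observation that $\sigma(x)\ge0$ forces $\sigma_k(x)\ge0$ for all $k$.
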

\textbf{Note:} The proof of the above proposition is essentially
identical to the proof of the analogous statement for the standard
divisible sandpile, as given in Lemma~3.1 in~\cite{levine-peres-09},
with only minor adjustments for change in notation and the restriction
that our infinitely covering sequence now is a subset of $\hat{B}_R$
instead of $\xi\Z^d$ as in~\cite{levine-peres-09}; we include it
here for completeness.
\begin{proof}
  It is evident from the definition of the toppling procedure that the
  $k$th mass configuration $\sigma_k$ can only be nonzero in
  $\hat{B}_R$ (the set covered by the sites at which we perform
  toppling) and possibly also on the boundary of $\hat{B}_R$, so for
  every $k$ we have $\sigma_k(x) = 0$ if $|x| \geq R + 2$. We define
  the $k$th quadratic weight $Q_k$ through
  \begin{gather}
    \label{eq:def-quadr-weight}
    Q_k := \sum_{x \in \xi\Z^d} \sigma_k(x) |x|^2.
  \end{gather}
  (Here $\sigma_0 = \sigma$.) On one hand, this immediately yields
  \begin{gather}
    Q_k = \sum_{x \in \xi\Z^d} ((\sigma_k)_+(x) - (\sigma_k)_-(x)) |x|^2
    \leq \sum_{x \in \xi\Z^d} (\sigma_k)_+(x) |x|^2.
  \end{gather}
  We now claim that for every $k \geq 1$ we have
  \begin{gather}
    \label{eq:sigma-k-plus-iter-ineq}
    \sum_{x \in \xi\Z^d} (\sigma_k)_+(x) \leq \sum_{x \in \xi\Z^d}
    (\sigma_{k-1})_+(x),
  \end{gather}
  which then, by iteration and the fact that $\sigma_k(x) = 0$ for all
  $|x| \geq R + 2$, leads to the inequality
  \begin{gather}
    Q_k \leq (R + 2)^2 M_+ \text{ for all } k \geq 0,
  \end{gather}
  where $M_+$ is the total mass of the non-negative part of the
  initial mass configuration $\sigma$:
  \begin{gather*}
    M_+ := \sum_{x \in \xi\Z^d} \sigma_+(x).
  \end{gather*}
  To prove \eqref{eq:sigma-k-plus-iter-ineq}, we first observe that if
  $(\sigma_{k-1})_+(x_k) = 0$ there is nothing to prove, since
  $\sigma_k(x) = T_{x_k}\sigma_{k-1}(x) = \sigma_{k-1}(x)$ then holds
  for every $x$. For now we therefore assume
  $(\sigma_{k-1})_+(x_k) > 0$. This implies that
  $(\sigma_k)_+(x_k) = 0$, $(\sigma_k)_+(x) = (\sigma_{k-1})_+(x)$ for
  all $x \neq x_k$ with $x \not \sim x_k$, and for every $x \sim x_k$
  the inequality
  \begin{gather}
    (\sigma_k)_+(x) \leq (\sigma_{k-1})_+(x) +
    \frac{(\sigma_{k-1})_+(x_k)}{2d}
  \end{gather}
  holds. The left hand side of \eqref{eq:sigma-k-plus-iter-ineq} then
  becomes
  \begin{align}
    \sum_{x \in \xi\Z^d} &(\sigma_k)_+(x) = \sum_{x \sim x_k} (\sigma_k)_+(x) + \sum_{\substack{x \neq x_k, \\ x \not \sim x_k}} (\sigma_k)_+(x) \nonumber \\
    &\leq \sum_{x \sim x_k} \left( (\sigma_{k-1})_+(x) +
      \frac{(\sigma_{k-1})_+(x_k)}{2d} \right) + \sum_{\substack{x \neq x_k, \\ x \not \sim x_k}} (\sigma_{k-1})_+(x) \nonumber \\
    &= (\sigma_{k-1})_+(x_k) + \sum_{x \neq x_k} (\sigma_{k-1})_+(x) =
    \sum_{x \in \xi\Z^d} (\sigma_{k-1})_+(x),
  \end{align}
  as desired. Since the total mass of $\sigma_k$ is equal to the total
  mass of $\sigma_{k-1}$ by construction, inequality
  \eqref{eq:sigma-k-plus-iter-ineq} immediately implies that we also
  have
  \begin{gather}
    \label{eq:sigma-k-minus-iter-ineq}
    \sum_{x \in \xi\Z^d} (\sigma_k)_-(x) \leq \sum_{x \in \xi\Z^d}
    (\sigma_{k-1})_-(x)
  \end{gather}
  for each $k \geq 1$, which, in a similar manner, in turn implies the
  lower bound
  \begin{gather}
    Q_k \geq - (R + 2)^2 M_- \text{ for all } k \geq 0,
  \end{gather}
  where $M_-$ is the total mass of the non-positive part of the
  initial mass configuration $\sigma$:
  \begin{gather*}
    M_- := \sum_{x \in \xi\Z^d} \sigma_-(x).
  \end{gather*}
  We have thus established the following bounds on $Q_k$ for each
  $k \geq 0$:
  \begin{gather}
    - (R + 2)^2 M_- \leq Q_k \leq (R + 2)^2 M_+.
  \end{gather}
  From \eqref{eq:def-quadr-weight} it follows for $k \geq 1$ that
  \begin{gather}
    \label{eq:quadr-weight-calc-1}
    Q_k - Q_{k-1} = \sum_{x \in \xi\Z^d} (\sigma_k(x) -
    \sigma_{k-1}(x))|x|^2,
  \end{gather}
  and from the definition of $\sigma_k$ as
  $\sigma_k = T_{x_k} \sigma_{k-1}$ one obtains slightly different but
  related results depending on if the $x \in \xi\Z^d$ is equal to the
  toppling point $x_k$, is merely adjacent to $x_k$, or neither of
  these: for $x = x_k$ a trivial calculation shows that
  $\sigma_k(x_k) - \sigma_{k-1}(x_k) = -(\sigma_{k-1})_+(x_k)$, if
  instead $x \sim x_k$ then
  $\sigma_k(x) - \sigma_{k-1}(x) = \frac{1}{2d}(\sigma_{k-1})_+(x_k)$
  holds, and if $x$ is neither equal to nor adjacent to $x_k$ then
  $\sigma_k(x) = \sigma_{k-1}(x)$. Inserting these results into
  \eqref{eq:quadr-weight-calc-1} yields
  \begin{align*}
    Q_k - Q_{k-1} &= -(\sigma_{k-1})_+(x_k) |x_k|^2 +
    \frac{1}{2d}(\sigma_{k-1})_+(x_k) \sum_{x \sim x_k}
    |x|^2 \\
    &= (\sigma_{k-1})_+(x_k) \cdot \underbrace{\frac{1}{2d} \sum_{x
        \sim
        x_k}\left(|x|^2 - |x_k|^2 \right)}_{=\xi^2 \cdot (\Delta |x|^2)(x_k)=\xi^2} \\
    &= \xi^2 \cdot (\sigma_{k-1})_+(x_k).
  \end{align*}
  This in turn implies that
  \begin{gather}
    \label{eq:quadr-weight-calc-2}
    Q_k = Q_0 + \xi^2 \cdot \sum_{j=1}^k (\sigma_{j-1})_+(x_j).
  \end{gather}
  Now consider the $k$th odometer function $u_k$: the value of
  $u_k(x)$ is defined as $\xi^2$ times the total mass emitted from $x$
  during the $k$ first applications of the toppling procedure,
  therefore we can write the value of $u_k$ at $x$ as
  \begin{gather*}
    u_k(x) = \xi^2 \sum_{1 \leq j \leq k: x_j = x} (\sigma_{j-1})_+(x)
  \end{gather*}
  If we now sum $u_k(x)$ over all $x \in \xi\Z^d$, keeping in mind that
  $u_k(x)$ will be zero for every $x$ outside $\hat{B}_R$ and that our
  sequence $x_1, x_2, \ldots$ is an infinitely covering sequence of
  $\hat{B}_R$, then we obtain
  \begin{gather}
    \label{eq:sum-of-uk-is-sum-of-toppled-mass}
    \sum_{x \in \xi\Z^d} u_k(x) = \xi^2 \sum_{x \in \xi\Z^d} \sum_{1
      \leq j \leq k:x_j = x} (\sigma_{j-1})_+(x) = \xi^2 \cdot \sum_{j=1}^k
    (\sigma_{j-1})_+(x_j).
  \end{gather}
  Combining this last result with \eqref{eq:quadr-weight-calc-2} and
  our previously established bounds $Q_k \leq (R+2)^2 M_+$ and
  $-Q_0 \leq (R+2)^2 M_-$, we get
  \begin{gather}
    \label{eq:quadr-weight-calc-3}
    \sum_{x \in \xi\Z^d} u_k(x) \leq (R+2)^2 M,
  \end{gather}
  where $M := M_+ + M_-$. As the right side of
  \eqref{eq:quadr-weight-calc-3} is independent of $k$, and $u_k(x)$
  clearly is an increasing function of $k$ for each fixed $x \in
  \xi\Z^d$, it follows that for any fixed $x \in \xi\Z^d$ the
  sequence $\{u_k(x)\}_{k=1}^\infty$ is increasing and bounded from
  above, hence convergent. We define the \emph{odometer function} $u$
  to be this limit: for any $x \in \Z^d$ let
  \begin{gather}
    \label{eq:gds-def-odometer-function}
    u(x) := \lim_{k \to \infty} u_k(x).
  \end{gather}
  Now, if $y \sim x$ then it is clear from how we defined the toppling
  that $x$ after $k$ toppling steps has received a contribution of
  mass of size $\frac{1}{2d\xi^2}u_k(y)$ from $y$. Since this holds
  for each neighbouring point of $x$, it is clear that $x$ in total has
  received a mass of size $\frac{1}{2d\xi^2} \sum_{y \sim x} u_k(y)$
  after the $k$th toppling step. But during these steps we may also
  have performed toppling at $x$ itself, so to calculate the net
  difference in mass at $x$ at the $k$th step from our initial mass
  configuration at $x$ we need to subtract the mass emitted from $x$
  up to this point, \ie $u_k(x) / \xi^2$, from the total mass
  received. Hence we see that
  \begin{gather}
    \label{eq:gds-kth-mass-config-and-odom}
    \sigma_k(x) = \sigma(x) + \frac{1}{2d\xi^2} \sum_{y \sim x} (u_k(y) -
    u_k(x)) = \sigma(x) + \Delta u_k(x).
  \end{gather}
  However, we just showed that $u_k$ had a well-defined limit as $k$
  tends to infinity, and so relation
  \eqref{eq:gds-kth-mass-config-and-odom} shows that also $\sigma_k$
  has a limit, namely
  \begin{gather}
    \label{eq:gds-limiting-mass-config}
    \nu := \sigma + \Delta u.
  \end{gather}
  Finally, the proposed structure of $\nu = \nu_+ + \nu_-$ with $\supp
  \nu_+ \subset \partial \hat{B}_R$ and $\supp \nu_- \subset \supp
  \sigma_-$ is now evident: for any $x \in \hat{B}_R$ we have for
  infinitely many values of $k$ that $\sigma_k(x) \leq 0$ holds true
  (namely whenever we just toppled at $x$), hence the same inequality
  must hold for the limiting mass configuration that we now know
  exists, \ie $\nu(x) \leq 0$ for all $x \in \hat{B}_R$. Iteration of
  the estimate $(\sigma_k)_-(x) \leq (\sigma_{k-1})_-(x)$ for any $x
  \in \xi\Z^d$ and $k \geq 1$ implies that ${(\sigma_k)_-(x) \leq
  \sigma_-(x)}$, which in the limit $k \to \infty$ becomes $\nu_-(x)
  \leq \sigma_-(x)$, establishing $\supp \nu_- \subset \supp
  \sigma_-$. Finally, the fact that we only perform toppling in the
  set $\hat{B}_R$ implies that $\nu$ in principle only can be non-zero
  on the set $\hat{B}_R \cup \{x: x \sim y \text{ where } y \in
  \hat{B}_R\} = \hat{B}_R \cup \partial \hat{B}_R$. However, since we
  already know that $\nu$ is non-positive on $\hat{B}_R$, it follows,
  as desired, that $\supp \nu_+ \subset \partial \hat{B}_R$.
\end{proof}
Proposition~\ref{prop:bounded-gds} has an inherent problem in that the
limiting mass configuration $\nu$ seemingly may depend on the choice
of infinitely covering sequence of $\hat{B}_R$, but this is in fact
not the case. To see this, we will establish a characterization of the
odometer function $u$, and hence of the limiting mass configuration
$\nu$ via $\nu = \sigma + \Delta u$, that does not depend on the
choice of infinitely covering sequence; this characterization will
also in fact be our link to the partial balayage operation in the
continuous setting discussed later on in the paper.

To begin with, we need to define a discrete analogue of the potential
function in continuous potential theory. For any given function ${\mu:
\xi \Z^d \to \R}$, assumed to have compact (\ie finite) support, we
define the \emph{(discrete) potential} $U^\mu_\xi$ (or simply $U^\mu$
whenever it is clear which lattice we are referring to) of $\mu$ via
\begin{gather*}
  U^\mu \equiv U^\mu_\xi(x) := \xi^d \sum_{y \in \xi\Z^d}
  g_\xi(x,y) \mu(y).
\end{gather*}
Here $g_\xi(\cdot, \cdot)$ is the discrete Green's function on the
underlying lattice, defined for $x, y \in \xi \Z^d$ by
\begin{align*}
  g_\xi(x, y) := \left\{
    \begin{array}{l l}
      \frac{2}{\pi} \log \xi - \gamma_0 \left(\frac{x}{\xi},
        \frac{y}{\xi} \right) & \text{ if } d = 2, \\
      \frac{1}{\xi^{d-2}} \gamma_1 \left( \frac{x}{\xi},
        \frac{y}{\xi} \right) & \text{ if } d \geq 3,
    \end{array}
  \right.
\end{align*}
where 
\begin{align*}
  \gamma_0(x,y) = \lim_{n \to \infty} (\E_x |\{k \leq n : X_k = x\}| -
  \E_x |\{k \leq n : X_k = y\}|)
\end{align*}
is the (recurrent) potential kernel for simple random walk on $\Z^2$,
and $\gamma_1(x,y)$ is the Green's function for simple random walk on 
$\Z^d$ for $d \geq 3$,
\begin{align*}
  \gamma_1(x, y) = \E_x |\{k : X_k = y\}|.
\end{align*}
Here $\E_x$ denotes expectation with the simple random walk started at
the lattice site $x$; see \cite{lawler-limic-10, lawler-91} for
details on these Green's functions. The above definitions imply in
particular that
\begin{align*}
  -\Delta_1 g_\xi(x,y) = \frac{1}{\xi^d} \delta_x(y) = -\Delta_2 g_\xi(x,y),
\end{align*}
where $\delta_{x,y}$ is the Kronecker delta, and $\Delta_j$ is the
discrete Laplace operator acting on the $j$th variable. As an
immediate and important consequence, it follows that
\begin{align*}
  - \Delta U^\mu_\xi(x) = \xi^d \sum_{y \in \xi \Z^d} (-\Delta_1
  g_\xi(x,y)) \mu(y) = \sum_{y \in \xi \Z^d} \delta_{x,y} \mu(y) =
  \mu(x),
\end{align*}
just as in the continuous setting. In a similar manner, we can via an
easy calculation moreover see that for any function
$v : \xi \Z^d \to \R$ having finite support we have
$-U_\xi^{\Delta v}(x) = v(x)$.

That $u$ indeed is independent of the choice of infinitely covering
sequence now follows from the following proposition:
\begin{proposition}
  \label{prop:gds-as-obst-problem}
  Let $\sigma$ and $R > 0$ be as in
  Proposition~\ref{prop:bounded-gds}, and let $\nu$ and $u$ denote the
  corresponding limit functions relative to toppling of some
  infinitely covering sequence $x_1, x_2, \ldots$ of $\hat{B}_R$. 

  Then $\nu = \sigma + \Delta u$ and $u = U^{\sigma} - v$, where
  \begin{gather}
    \label{eq:gds-def-obst-problem}
    v(x) := \sup\{f(x) : \Delta f \geq 0 \text{ in } \hat{B}_R,\ 
    f \leq U^{\sigma} \text{ in } \xi\Z^d\}.
  \end{gather}
\end{proposition}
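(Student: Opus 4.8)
The plan is to mimic the argument from the introduction (the standard divisible sandpile case) but now in the signed, bounded setting, proving two inequalities $u \leq U^\sigma - v$ and $u \geq U^\sigma - v$, where $v$ is the obstacle-problem solution in \eqref{eq:gds-def-obst-problem}. The relation $\nu = \sigma + \Delta u$ is already contained in Proposition~\ref{prop:bounded-gds} (equation \eqref{eq:gds-limiting-mass-config}), so the work is entirely in identifying $u$.

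First I would set $v' := U^\sigma - u$ and check that $v'$ is a competitor in \eqref{eq:gds-def-obst-problem}. Since $u \geq 0$ we immediately get $v' \leq U^\sigma$ everywhere on $\xi\Z^d$. For subharmonicity inside $\hat B_R$: from $\nu = \sigma + \Delta u$ and $-\Delta U^\sigma = \sigma$ we compute $\Delta v' = \Delta U^\sigma - \Delta u = -\sigma - (\nu - \sigma) = -\nu$, and Proposition~\ref{prop:bounded-gds} gives $\nu \leq 0$ on $\hat B_R$, so $\Delta v' = -\nu \geq 0$ there. Hence $v' \in$ the competitor set, which yields $v \geq v'$, i.e. $u \geq U^\sigma - v$.

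For the reverse inequality I would take an arbitrary competitor $f$ (so $\Delta f \geq 0$ in $\hat B_R$ and $f \leq U^\sigma$ on $\xi\Z^d$) and show $f \leq U^\sigma - u$, which then gives $v \leq U^\sigma - u$, i.e. $u \leq U^\sigma - v$. Consider $g := f - U^\sigma + u = f - v'$. Outside $\hat B_R$, the odometer vanishes ($u = 0$, since toppling only occurs at sites of the infinitely covering sequence, all in $\hat B_R$), so there $g = f - U^\sigma \leq 0$. Inside $\hat B_R$ we have $\Delta g = \Delta f + \nu$; on the coincidence-type set $\{\nu = 0\} \cap \hat B_R$ this is $\Delta g = \Delta f \geq 0$, and on $\{\nu < 0\}$ — which, since $\nu \leq 0$ on $\hat B_R$, is where $u = 0$ could fail — I need a little more care: the clean statement is that $\nu(x) < 0$ forces $u(x) = 0$ is \emph{not} what we want; rather, the key observation, exactly as in the introduction, is that wherever $u(x) > 0$ the site $x$ was toppled infinitely often, hence $\nu(x) \leq 0$, and in fact one shows $\nu(x) = 0$ there is false in general — so instead I would argue directly: $g$ is superharmonic nowhere, but we only need $g \leq 0$. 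Split into $\{u > 0\}$ and $\{u = 0\}$: on $\{u = 0\}$, $g = f - U^\sigma \leq 0$ directly; on $\{u > 0\} \subset \hat B_R$, we have $\Delta g = \Delta f + \nu \geq \Delta f \geq 0$ only if $\nu \geq 0$ there, which it is \emph{not}. The correct route, matching the introduction verbatim, is: on $\hat B_R$ the measure $\nu$ vanishes precisely on $\{u>0\}$ in the standard case because $\nu=1$ there — here the analogue is that $\{u>0\}$ is where toppling drove the mass to its saturation value $0$, and indeed a separate lemma-type fact (provable by the same iteration as \eqref{eq:sigma-k-minus-iter-ineq}: if $x$ is toppled, then after each such toppling $\sigma_k(x) = -(\sigma_{k-1})_-(x) \leq 0$, and one checks $\nu(x)=0$ unless $x$ ever received positive mass from a toppled neighbour after its last relevant toppling) gives $\nu = 0$ on $\{u>0\} \cap \hat B_R$ when that set is ``closed off'' appropriately. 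To keep the plan honest: I would instead mirror the introduction's computation with $D = \{\nu = 0\}$-type set and $u=0$ off it, which is exactly the dichotomy used there, and conclude $g \leq 0$ everywhere by the minimum principle applied on $\{u > 0\}$ with the boundary/exterior bound $g \leq 0$ on $\partial\{u>0\} \cup (\xi\Z^d \setminus \hat B_R)$.

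The main obstacle is this last dichotomy: establishing that $\Delta g \geq 0$ holds on the set where $g$ could attain a positive interior maximum. In the standard sandpile this is transparent because $\nu \equiv 1$ on $\{u > 0\}$; in the generalized signed setting the analogous statement — that $\nu = 0$ on $\{u>0\} \cap \hat B_R$ — needs the observation that if $u(x)>0$ then $x$ was toppled, and at the \emph{last} toppling of $x$ its mass became $\leq 0$; combined with the fact that from that point on $x$ can only receive nonnegative contributions, one still must rule out that such late contributions push $\nu(x)$ strictly below $0$ while keeping $u(x)>0$ — but this cannot happen precisely because $x$ is toppled infinitely often by the infinitely-covering property, so $\sigma_k(x) \leq 0$ for infinitely many $k$ \emph{and} $\sigma_k(x)$ converges, forcing the subsequential structure to pin $\nu(x)$ at the toppling-value; a short argument shows $\nu(x) = 0$ whenever $u(x) > 0$. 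Once that is in hand, the minimum/maximum principle on the finite set $\{u>0\} \subset \hat B_R$ closes the argument, and independence of the odometer from the infinitely covering sequence follows since $v$ in \eqref{eq:gds-def-obst-problem} is defined without reference to any sequence.
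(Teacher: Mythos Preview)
Your overall architecture matches the paper's proof exactly: set $v' := U^\sigma - u$, show it competes in \eqref{eq:gds-def-obst-problem} (giving $v \geq v'$), then use the maximum principle on a suitable set to get $v \leq v'$. The paper carries out the second half with $v$ itself (after first verifying that the supremum $v$ is subharmonic in $\hat B_R$) rather than with an arbitrary competitor $f$, but this is cosmetic.

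Where you go astray is in the discussion of the key dichotomy. You write that ``$\nu(x) < 0$ forces $u(x) = 0$ is \emph{not} what we want'' and later that ``one shows $\nu(x) = 0$ there is false in general''; both statements are wrong. The implication $\nu(x) < 0 \Rightarrow u(x) = 0$ (equivalently, $u(x) > 0 \Rightarrow \nu(x) = 0$ for $x \in \hat B_R$) is precisely what is needed, and it holds for a much simpler reason than your several attempts suggest. If $u(x) > 0$ then at some step $k_0$ we toppled $x$ with $(\sigma_{k_0-1})_+(x) > 0$, leaving $\sigma_{k_0}(x) = 0$. From that step onward, the only changes to the mass at $x$ between successive visits of the sequence to $x$ come from topplings at neighbours, and these contributions are nonnegative; hence $\sigma_j(x) \geq 0$ for all $j \geq k_0$, so $\nu(x) = \lim_j \sigma_j(x) \geq 0$. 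Combined with $\nu \leq 0$ on $\hat B_R$ from Proposition~\ref{prop:bounded-gds}, this forces $\nu(x) = 0$. There is no ``last toppling'' to locate, no subsequential pinning argument, and no worry about late contributions pushing the mass below zero (they can only push it up). The paper simply calls this ``evident''; once you see the one-sided monotonicity it is.

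With that fact in hand, your argument closes exactly as the paper's does: on $D := \{y \in \hat B_R : \nu(y) = 0\}$ the difference $v - v'$ (or your $g = f - v'$) is subharmonic, and off $D$ one has $u = 0$, hence $v - v' = v - U^\sigma \leq 0$ there; the discrete maximum principle on the finite set $D$ finishes.
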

\begin{proof}
  We know that $\nu = \sigma + \Delta u$, hence $\Delta u = \nu -
  \sigma$. Let $v' := U^{\sigma} - u$. We immediately obtain
  \begin{align*}
    -\Delta v' = -\Delta U^{\sigma} + \Delta u = \sigma + \nu - \sigma =
    \nu = \nu_+ - \nu_-.
  \end{align*}
  Since $\nu$ is $\nu = -\nu_- \leq 0$ in $\hat{B}_R$, it follows that
  $\Delta v' \geq 0$ in $\hat{B}_R$. Moreover, as $u(x)$ is $\xi^2$
  times the total mass emitted from a site $x \in \xi\Z^d$ it is clear
  that $u \geq 0$ holds everywhere in $\Z^d$, and so
  $v' = U^{\sigma} - u \leq U^{\sigma}$. We conclude that $v'$ is a
  competing function in the definition of $v$ in
  \eqref{eq:gds-def-obst-problem}, which shows that $v \geq v'$ holds
  everywhere in $\xi\Z^d$.
  
  For the converse inequality, let us study the difference $v -
  v'$. First of all, we observe that \eqref{eq:gds-def-obst-problem}
  implies that also the solution $v$ to the obstacle problem will
  satisfy $\Delta v \geq 0$ in $\hat{B}_R$. Indeed, let $f$ be any
  function satisfying both $\Delta f \geq 0$ in $\hat{B}_R$ and
  $f \leq U^{\sigma}$ in $\xi\Z^d$. That $f$ is subharmonic in
  $\hat{B}_R$ means that
  \begin{gather*}
    f(x) \leq \frac{1}{2d} \sum_{y \sim x} f(y)
  \end{gather*}
  holds for every $x \in \hat{B}_R$. Using the inequality $v \geq f$
  on the right hand side implies
  \begin{gather*}
    f(x) \leq \frac{1}{2d} \sum_{y \sim x} v(y),
  \end{gather*}
  and taking supremum over all such functions $f$ on the left hand side
  shows that $\Delta v \geq 0$ must hold everywhere in $\hat{B}_R$.
  
  Now, we have
  \begin{gather*}
    \Delta(v - v') = \Delta v - \Delta v' = \Delta v + \nu.
  \end{gather*}
  For every $x$ belonging to the set
  $D := \{y \in \hat{B}_R: \nu(y) = 0\}$ it is thus clear that
  $\Delta(v-v')(x)=\Delta v(x) \geq 0$, since we just established that
  $v$ is subharmonic in $\hat{B}_R$. On the other hand, for every
  $x \in \hat{B}_R \setminus D$ we must have $\nu(x) < 0$, which
  evidently implies that $x$ must be a site that, during the toppling
  process, never emitted any mass, \ie a site where $u(x) = 0$. Since
  we only do toppling at the sites belonging to $\hat{B}_R$, it is
  moreover clear that $u(x) = 0$ for every $x \notin D$. But for any
  such $x$ we then obtain
  \begin{gather*}
    (v - v')(x) = v(x) - U^{\sigma}(x) + u(x) = v(x) - U^{\sigma}(x)
    \leq 0.
  \end{gather*}
  Hence $v - v'$ is a function that is subharmonic on $D$ and
  satisfies $v - v' \leq 0$ outside $D$, and so the maximum principle
  implies that $v - v' \leq 0$ in fact must hold everywhere on
  $\xi\Z^d$, \ie $v \leq v'$ holds everywhere. We can now finally
  conclude that $v = v'$, hence $u = U^{\sigma} - v$ as stated.
\end{proof}

As seen in the two previous propositions, we obtain for each $R > 0$ a
well-defined generalized mass configuration $\nu = \nu_+ - \nu_-$ as
long as the support of $\sigma$ belongs to $\hat{B}_R$. For sake of
simplicity, we introduce the following notation:
\begin{definition}
  Let $\sigma: \xi\Z^d \to \R$ be a generalized mass configuration and
  let $R > 0$ be such that $\supp \sigma \subset \hat{B}_R$. We call
  the resulting generalized mass configuration $\nu$ in
  Propositions~\ref{prop:bounded-gds} and
  \ref{prop:gds-as-obst-problem} the \emph{generalized divisible
    sandpile configuration of $\sigma$ in $\hat{B}_R$}, and denote
  this configuration ${\gds_R(\sigma) \equiv \gds^\xi_R(\sigma) := \nu}$
  (as a function on $\xi \Z^d$).
\end{definition}

\begin{remark}
  In the previous definition $\gds_R^{\xi}(\sigma)$ is a function
  defined on the same lattice $\xi \Z^d$ as $\sigma$. However, we can
  in a natural way interpret $\gds_R^{\xi}(\sigma)$ as a (signed)
  \emph{measure on $\R^d$} (with some slight abuse of notation):
  \begin{gather*}
    \gds_R^{\xi}(\sigma) = \xi^d \sum_{x \in \xi \Z^d} \gds_R^{\xi}(\sigma)(x)
    \delta_x,
  \end{gather*}
  where $\delta_x$ is the Dirac measure at $x$. That this is
  well-defined follows from the fact that $\gds_R^\xi(\sigma)(x)$ is
  bounded, and zero except for finitely many $x \in \xi \Z^d$.
\end{remark}

\subsection{GDS and energy minimization}

There is a rather natural interpretation of the algorithm for the
generalized divisible sandpile as that minimizing a certain energy. In
the continuous setting, the energy of a measure $\mu$ is often defined
as
\begin{align*}
  I[\mu] = \int_{\R^d} U^\mu \d \mu.
\end{align*}
Following this, we define in the discrete setting the energy
$\En[\eta]$ of a mass configuration $\eta : \xi \Z^d \to \R$ using
\begin{align*}
  \En[\eta] :=& \int_{\R^d}(U^\eta_\xi)^\square(x) \eta^\square(x)
                \d x = \int_{\R^d} U^\eta_\xi(x^{::}) \eta(x^{::}) \d x \\
  =& \xi^d \sum_{y \in \xi \Z^d} U^\eta_\xi(y) \eta(y) = \xi^{2d}
     \sum_{x,y \in \xi \Z^d} g_\xi(x,y) \eta(x) \eta(y).
\end{align*}
For later use, we also define the \emph{mutual energy}
$\En[\sigma, \kappa]$ between two mass configurations $\eta$ and
$\kappa$ defined on the same lattice $\xi \Z^d$ as
\begin{align*}
  \En[\eta, \kappa] := \xi^d \sum_{y \in \xi \Z^d} U^\eta_\xi(y)
  \kappa(y). 
\end{align*}
Note that $\En[\eta, \kappa] = \En[\kappa, \eta]$ and
$\En[\eta] = \En[\eta, \eta]$.

In a rather straightforward way, we can explicitly calculate how the
energy behaves when we perform a toppling in the algorithm for the
generalized divisible sandpile. Let $\sigma$, $x_1, x_2, \ldots$ and
$\sigma_k = T_{x_k} \sigma_{k-1}$ be as in
Proposition~\ref{prop:bounded-gds}, let $\En_k = \En[\sigma_k]$, and
study the difference $\En_k - \En_{k-1}$ in energy between two mass
configurations that only differ in that we have toppled in precisely
one point (the point $x_k$). We write
\begin{align}
  \label{eq:energy-diff-one-toppling}
  \En_k - \En_{k-1} = \xi^{2d} \sum_{x,y \in \xi \Z^d} g_\xi(x,y) d(x,y),
\end{align}
where we let
$d(x,y) := \sigma_k(x) \sigma_k(y) -
\sigma_{k-1}(x)\sigma_{k-1}(y)$. If the mass of $\sigma_{k-1}$ at
the point $x_k$ where we want to topple satisfies
$\sigma_{k-1}(x_k) \leq 0$, then the mass configuration is
unchanged, \ie $\sigma_k = \sigma_{k-1}$ everywhere, hence
$d(x,y) = 0$ for all $x, y \in \xi \Z^d$ and $\En_k =
\En_{k-1}$. Assume therefore that $(\sigma_{k-1})_+(x_k) > 0$, so
that $\sigma_{k-1}$ and $\sigma_k$ are not equal everywhere. In that
case, the double sum in \eqref{eq:energy-diff-one-toppling} can be
split into nine different terms, depending on if $x$ (and similarly
for $y$) is either equal to the toppling point $x_k$, is a neighbour
of $x_k$, or belongs to the set
$S_k := \xi \Z^d \setminus (\{x_k\} \cup \{z : z \sim x_k\})$. We
get
\begin{align*}
  \En_k - \En_{k-1} &= \xi^{2d} \left[ g_\xi(x_k, x_k) d(x_k, x_k) +
                      \sum_{y \sim x_k} g_\xi(x_k, y) d(x_k, y) \right. \\
  +& \sum_{y \in S_k} g_\xi(x_k, y) d(x_k, y) + \sum_{x \sim x_k}
     g_\xi(x, x_k) d(x,x_k) \\
  +& \sum_{x \sim x_k} \sum_{y \sim x_k} g_\xi(x,y) d(x,y) + \sum_{x
     \sim x_k} \sum_{y \in S_k} g_\xi(x,y) d(x,y) \\
  +& \sum_{x \in S_k} g_\xi(x, x_k) d(x, x_k) + \sum_{x \in S_k} \sum_{y
     \sim x_k} g_\xi(x, y) d(x, y) \\
  +& \left. \sum_{x \in S_k}\sum_{y \in S_k} g_\xi(x, y) d(x, y) \right].
\end{align*}
Since both $g_\xi(\cdot, \cdot)$ and $d(\cdot, \cdot)$ are symmetric
functions in their respective arguments, the above can be reduced to
\begin{align*}
  \En_k - \En_{k-1} &= \xi^{2d} \left[ g_\xi(x_k, x_k) d(x_k, x_k) +
                      2 \sum_{x \sim x_k} g_\xi(x, x_k) d(x, x_k) \right. \\
  +& \  2 \sum_{x \in S_k} g_\xi(x, x_k) d(x, x_k) + \sum_{x \sim x_k}
     \sum_{y \sim x_k} g_\xi(x,y) d(x,y) \\
  +& \left. 2 \sum_{x
     \sim x_k} \sum_{y \in S_k} g_\xi(x,y) d(x,y) + \sum_{x \in S_k}\sum_{y \in S_k} g_\xi(x, y) d(x, y) \right],
\end{align*}
which shows that we only have to calculate the combination
$g_\xi(x,y) d(x,y)$ for the six different cases appearing in this
expression:
\begin{itemize}
\item $x = x_k$, $y = y_k$: since we topple at $x_k$ we have
  $\sigma_k(x_k) = 0$, thus
  $d(x_k, x_k) = - \sigma_{k-1}(x_k)^2$,
\item $x \sim x_k, y = x_k$:
  $d(x,x_k) = - \sigma_{k-1}(x_k) \sigma_{k-1}(x)$,
\item $x \in S_k, y = x_k$:
  $d(x,x_k) = - \sigma_{k-1}(x_k) \sigma_{k-1}(x)$,
\item $x \sim x_k, y \sim x_k$:
  $\displaystyle d(x,y) = \frac{\sigma_{k-1}(x_k)}{2d} \cdot \left(\sigma_{k-1}(x) +
    \sigma_{k-1}(y) + \frac{\sigma_{k-1}(x_k)}{2d}\right)$,
\item $x \sim x_k, y \in S_k$: $\displaystyle d(x,y) = \frac{\sigma_{k-1}(x_k) \sigma_{k-1}(y)}{2d}$,
\item $x \in S_k, y \in S_k$: $d(x,y) = 0$.
\end{itemize}
Inserting this into the above and simplifying, once more also using
the symmetric property of $g_\xi(\cdot, \cdot)$, we obtain
\begin{align*}
  \En_k - \En_{k-1} &= \xi^{2d} \sigma_{k-1}(x_k) \left[
                      -g_\xi(x_k, x_k) \sigma_{k-1}(x_k) - 2 \sum_{x \neq x_k}
                      g_\xi(x, x_k) \sigma_{k-1}(x) \right. \\
  +& \left. 2 \frac{1}{2d} \sum_{x \neq x_k} \sum_{y \sim x_k} g_\xi(x,y)
     \sigma_{k-1}(x) + \frac{1}{4d^2} \sum_{x \sim x_k} \sum_{y \sim x_k}
     g_\xi(x,y) \sigma_{k-1}(x_k) \right]
\end{align*}
Two of the four terms vanish, since they can be combined in the
following manner:
\begin{align*}
  - &\sum_{x \neq x_k} g_\xi(x, x_k) \sigma_{k-1}(x) + \frac{1}{2d}
      \sum_{x \neq x_k} \sum_{y \sim x_k} g_\xi(x, x_k) \sigma_{k-1}(x)
  \\
    &= \sum_{x \neq x_k} \sigma_{k-1}(x) \left[ -g_\xi(x,x_k) +
      \frac{1}{2d} \sum_{y \sim x_k} g_\xi(x,y) \right] \\
    &= \sum_{x \neq x_k} \sigma_{k-1}(x) \underbrace{ \left[\frac{1}{2d} \sum_{y
      \sim x_k} (g_\xi(x,y) - g_\xi(x,x_k) \right]}_{ = \xi^2 \Delta_2
      g_\xi(x,x_k) = -\xi^{2-d} \delta_{x_k}(x)} \\
    &= - \xi^{2-d} \sum_{x \neq x_k} \sigma_{k-1}(x) \delta_{x_k}(x) = 0.
\end{align*}
As for the two remaining terms in $\En_k - \En_{k-1}$, we see in a
similar way that
\begin{align*}
  -g_\xi(x_k, &x_k) + \frac{1}{4d^2} \sum_{x \sim x_k} \sum_{y \sim
                x_k} g_\xi(x, y) \\
              &= \frac{1}{2d} \sum_{x \sim x_k}
                \left[-g_\xi(x_k, x_k) + g_\xi(x,x_k) - g_\xi(x,x_k) +
                \frac{1}{2d} \sum_{y \sim x_k} g_\xi(x,y) \right] \\
              &= \xi^2 \Delta_1 g_\xi(x_k,x_k) + \frac{1}{2d} \sum_{x \sim x_k}
                \underbrace{\frac{1}{2d} \sum_{y \sim x_k} \left(
                g_\xi(x,y) - g_\xi(x,x_k) \right)}_{=\xi^2 \Delta_2
                g_\xi(x,x_k)} \\
              &= -\xi^{2-d} \delta_{x_k}(x_k) - \xi^{2-d} \frac{1}{2d} \sum_{x
                \sim x_k} \delta_{x_k}(x) = - \xi^{2-d},
\end{align*}
from which it immediately finally follows that
\begin{align*}
  \En_k - \En_{k-1} = -\xi^{2d} \sigma_{k-1}(x_k)^2 \xi^{2-d} = -
  \xi^{d+2} (\sigma_{k-1})_+(x_k)^2,
\end{align*}
\ie whenever it happens that
$(\sigma_{k-1})_+(x_k)$ is positive at toppling step
$k$, then the energy strictly decreases. The total energy after
$k$ steps is
\begin{align}
  \En_k = \En_0 - \xi^{d+2} \sum_{j=1}^k (\sigma_{j-1})_+(x_j)^2.
\end{align}
Comparing this with \eqref{eq:sum-of-uk-is-sum-of-toppled-mass}
immediately shows that $\En_k$ has a finite limit as $k \to \infty$.

Now, consider the problem of finding a mass configuration
$\tilde{\nu}$ with the properties $\tilde{\nu} \leq
0$ in $\hat{B}_R$ and with the same total mass as
$\sigma$, that minimizes energy of the difference between
$\sigma$ and $\tilde{\nu}$, \ie that solves the problem
\begin{align*}
  \min \En[\sigma - \tilde{\nu}] : \tilde{\nu} \leq 0 \text{ in }
  \hat{B}_R \text{ and } \sum_{y \in \xi \Z^d} \tilde{\nu}(y) =
  \sum_{y \in \xi \Z^d} \sigma(y).
\end{align*}
We claim that $\nu :=
\gds^\xi_R(\sigma)$ is the (unique) solution to this problem. By
usual Hilbert space theory arguments, it suffices to show that
\begin{align*}
  \En[\sigma - \nu, \nu - \tilde{\nu}] \geq 0
\end{align*}
holds for all $\tilde{\nu}$ with $\tilde{\nu} \leq 0$ in
$\hat{B}_R$ and $\sum_{y \in \xi \Z^d} \tilde{\nu}(y) = \sum_{y \in
  \xi \Z^d} \sigma(y)$. To begin with, we have
\begin{align*}
  \En[\sigma - \nu, \nu - \tilde{\nu}] = \xi^d \sum_{y \in \xi \Z^d}
  (U^\sigma(y) - U^\nu(y)) (\nu(y) - \tilde{\nu}(y)).
\end{align*}
By the definition of $\nu = \sigma + \Delta u$, where
$u$ is the limiting odometer function, it follows that $U^\nu =
U^\sigma -
u$, \ie the first factor in the sum above is precisely $U^\sigma -
U^\nu =
u$. It follows that we may reduce the set over which we sum to the
set of points where $u$ is non-zero, \ie $\{y \in \xi \Z^d : u(y) >
0\}$ (which is a subset of $\hat{B}_R$). However, if $u(y) >
0$ then some mass must have been emitted from
$y$ in the construction of $\gds_R^\xi(\sigma)$, thus $\nu(y) =
0$ must hold. We then obtain
\begin{align*}
  \En[\sigma - \nu, \nu - \tilde{\nu}] = \xi^d \sum_{y : u(y) > 0}
  u(y) (- \tilde{\nu}(y)) \geq 0,
\end{align*}
since both $u$ and
$-\tilde{\nu}$ are non-negative. We can in fact calculate an
explicit expression for the minimizing energy by studying
$\En[\sigma - \sigma_k]$ and letting ${k \to
  \infty}$. For the difference $\En[\sigma - \sigma_k] - \En[\sigma
-
\sigma_{k-1}]$ between two successive steps in the algorithm we get
\begin{align*}
  \En[\sigma - \sigma_k] - \En[\sigma - \sigma_{k-1}] =
  \En[\sigma_k] - \En[\sigma_{k-1}] - 2\En[\sigma, \sigma_k -
  \sigma_{k-1}].
\end{align*}
We already know that $\En[\sigma_k] - \En[\sigma_{k-1}] = -\xi^{d+2}
(\sigma_{k-1})_+(x_k)^2$. For the last term, we get
\begin{align*}
  \En[\sigma,\sigma_k - \sigma_{k-1}] = \En[\sigma_k -
  \sigma_{k-1},\sigma] = \xi^d \sum_{x \in \xi \Z^d} U^{\sigma_k -
  \sigma_{k-1}}(x) \sigma(x),
\end{align*}
and, utilizing that $(\sigma_k - \sigma_{k-1})(x) =
(\sigma_{k-1})_+(x_k) \xi^2 \Delta \delta_{x_k}(x)$, hence
\begin{align*}
  U^{\sigma_k - \sigma_{k-1}}(x) &= (\sigma_{k-1})_+(x_k) \xi^2 U^{\Delta
                                   \delta_{x_k}}(x) \\
                                 &= (\sigma_{k-1})_+(x_k) \xi^2 \Delta
                                   U^{\delta_{x_k}}(x) = - (\sigma_{k-1})_+(x_k) \xi^2 \delta_{x_k}(x),
\end{align*}
it follows that
\begin{align*}
  \En[\sigma,\sigma_k - \sigma_{k-1}] = - \xi^{d+2}
  (\sigma_{k-1})_+(x_k) \sigma(x_k).
\end{align*}
We can summarize the above to draw the conclusion
\begin{align*}
  \En[\sigma - \sigma_k] - \En[\sigma - \sigma_{k-1}] &= - \xi^{d+2}
                                                        (\sigma_{k-1})_+(x_k)^2 + 2 \xi^{d+2} (\sigma_{k-1})_+(x_k) \sigma(x_k) \\
                                                      &= \xi^{d+2} (\sigma_{k-1})_+(x_k) (2 \sigma(x_k) - (\sigma_{k-1})_+(x_k)),
\end{align*}
hence
\begin{align*}
  \En[\sigma - \sigma_k] &= \sum_{j=1}^k(\En[\sigma - \sigma_j] -
                           \En[\sigma - \sigma_{j-1}]) \\
                         &= \xi^{d+2}
                           \sum_{j=1}^k(\sigma_{j-1})_+(x_j) (2 
                           \sigma(x_j) - (\sigma_{j-1})_+(x_j)).
\end{align*}
From this it follows that the minimizing energy is precisely
\begin{align}
  \En[\sigma - \nu] = \xi^{d+2} \sum_{j=1}^\infty(\sigma_{j-1})_+(x_j)
  (2 \sigma(x_j) - (\sigma_{j-1})_+(x_j)).
\end{align}
Note that this is convergent, as the factor $2\sigma(x_j) -
(\sigma_{j-1})_+(x_j)$ is bounded and the sum $\xi^2
\sum_{j=1}^\infty
(\sigma_{j-1})_+(x_j)$ is by
\eqref{eq:sum-of-uk-is-sum-of-toppled-mass} precisely equal to
\begin{align*}
  \sum_{x \in \xi \Z^d} u(x) = \sum_{x \in \hat{B}_R} u(x) < \infty.
\end{align*}

\subsection{A natural scaling limit of the bounded GDS}
\label{sec:sc-lim-bdd-gds}

As mentioned in the introduction, we are interested in studying all of
the above in the natural scaling limit, \ie as the lattice spacing
tends to zero. For this reason, we simply fix a sequence of positive
real numbers $\{ \xi_n \}_{n=1}^\infty$, which is assumed to be
monotonically decreasing and with limit zero as $n$ tends to
infinity. Our initial generalized mass configuration $\sigma$ is now
assumed to be a bounded function defined on $\R^d$ instead of some
lattice, and for each lattice constant $\xi_n$ we now discretize
$\sigma$ in precisely the same way as in
Section~\ref{sec:prel-main-result}, \ie we define for each
$n = 1, 2, \ldots$ the function $\sigma_n : \xi_n \Z^d \to \R$ via
\begin{align}
  \label{eq:sigma-sc-def}
  \sigma_n(x) := \frac{1}{\xi_n^d} \int_{x^\square} \sigma(y) \d y.
\end{align}
For each $n$ we thus obtain a generalized mass configuration on a
lattice, can perform the generalized divisible sandpile algorithm on
each such configuration, and hence will obtain a sequence of
generalized mass configurations
$\{\gds_R^{\xi_n}(\sigma_n)\}_{n=1}^\infty$ (for some $R$ chosen in a
suitable manner). Note that the discretization above comes at a
(slight) price: in general we do not necessarily have
$(\sigma_n)_+ = (\sigma_+)_n$ or $(\sigma_n)_- = (\sigma_-)_n$, only
in the limit $n \to \infty$.

We claim the following:
\begin{theorem}
  \label{thm:bdd-gds-conv-to-pb}
  Let $\sigma : \R^d \to \R$ be a bounded and almost everywhere
  continuous function with compact support for which
  $\int_{\R^d} \sigma(x) \d x < 0$, let $\{\xi_n\}_{n=1}^\infty$ be a
  sequence of positive decreasing lattice constants such that
  $\xi_n \searrow 0$ as $n \to \infty$, and for each $n=1,2,\ldots$
  let $\sigma_n : \xi_n \Z^d \to \R$ be the discretization of $\sigma$
  relative to $\xi_n \Z^d$ as in \eqref{eq:sigma-sc-def}. Assume
  $R > 0$ is such that $\supp \sigma \subset B(0,R)$ and
  $\supp \sigma_n \subset B(0,R)$ for all $n$. Then, in the sense of distributions,
  \begin{align}
    \label{eq:bdd-gds-conv-to-pb}
    \gds_R^{\xi_n}(\sigma_n) \to \bal_R(\sigma, 0) \text{ as } n \to \infty.
  \end{align}
\end{theorem}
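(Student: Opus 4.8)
The plan is to follow the argument for the standard divisible sandpile in \cite{levine-07,levine-peres-09}, with the potential $U^\sigma$ playing the role of the obstacle. By Propositions~\ref{prop:bounded-gds} and~\ref{prop:gds-as-obst-problem} we have $\gds_R^{\xi_n}(\sigma_n) = \sigma_n + \Delta u_n$, where $u_n = U^{\sigma_n}_{\xi_n} - v_n$ and $v_n$ solves the discrete obstacle problem \eqref{eq:gds-def-obst-problem}, while Theorem~\ref{thm:partial-balayage-bdd-sol-fn-props} and Remark~\ref{remark:mod-schwarz-potential} give $\bal_R(\sigma,0) = \sigma + \frac{1}{2d}\Delta u^{\sigma,R}$ with $u^{\sigma,R} = U^\sigma - V^{\sigma,R}$. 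For $\phi \in C_c^\infty(\R^d)$, discrete summation by parts (the discrete Laplacian is self-adjoint for the counting measure) gives $\langle \gds_R^{\xi_n}(\sigma_n), \phi\rangle = \xi_n^d\sum_{x}\sigma_n(x)\phi(x) + \xi_n^d\sum_x u_n(x)\,\Delta(\phi|_{\xi_n\Z^d})(x)$. The first sum tends to $\int\sigma\phi$ (it is a Riemann-type sum, $\sigma$ being bounded with compact support and $\phi$ Lipschitz), and since $\Delta(\phi|_{\xi_n\Z^d}) \to \frac{1}{2d}\Delta\phi$ uniformly by Taylor expansion, the whole statement reduces to showing that the step functions $u_n^\square$ converge to $u^{\sigma,R}$, say locally uniformly (in fact $L^1_{\mathrm{loc}}$ already suffices); then the second sum tends to $\frac{1}{2d}\int u^{\sigma,R}\Delta\phi = \frac{1}{2d}\langle\Delta u^{\sigma,R},\phi\rangle$, which is exactly $\langle\bal_R(\sigma,0),\phi\rangle - \int\sigma\phi$.

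Next I would establish uniform bounds and compactness of $\{u_n^\square\}$. First, $U^{\sigma_n}_{\xi_n} \to U^\sigma$ uniformly on $\R^d$, from the standard estimates comparing the discrete Green's function $g_{\xi_n}$ to the Newton kernel (see \cite{lawler-limic-10,lawler-91}) together with $\|\sigma_n\|_\infty \le \|\sigma\|_\infty$, $\sigma_n\to\sigma$ in $L^1$, and the integrability of the kernel near the diagonal. Clearly $u_n \ge 0$, and $u_n$ vanishes on and outside $\bdry\hat B_R$. On $\hat B_R$ we have, by Proposition~\ref{prop:bounded-gds}, $\nu_n = -\nu_{n,-} \le 0$ and $\nu_{n,-}\le\sigma_{n,-}$ pointwise, whence $\Delta u_n = \nu_n - \sigma_n \ge -\sigma_{n,+} \ge -\|\sigma\|_\infty$ there; comparing with the discrete barrier $b(x) := \|\sigma\|_\infty((R+1)^2 - |x|^2)$, which satisfies $\Delta b \equiv -\|\sigma\|_\infty$ since $\Delta|x|^2\equiv 1$ in our normalization, the discrete maximum principle gives $0 \le u_n \le \|\sigma\|_\infty(R+1)^2$. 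Moreover $\nu_{n,-}\le\sigma_{n,-}$ forces $\Delta u_n = \nu_n = 0$ at every site of $\hat B_R$ where $\sigma_n$ vanishes, so, since $\supp\sigma \Subset B(0,R)$, the $u_n$ are discretely harmonic in a fixed neighbourhood of $\bdry\hat B_R$ and satisfy $|\Delta u_n|\le\|\sigma\|_\infty$ elsewhere; standard discrete interior and boundary estimates for the ball then give a uniform modulus of continuity for $u_n^\square$, which all vanish outside $B(0, R + \sqrt d\,\xi_n)$. By Arzel\`a--Ascoli $\{u_n^\square\}$ is precompact in $C(\R^d)$.

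Finally I would identify the limit. Let $w$ be any subsequential locally uniform limit of $u_n^\square$; then $w\ge 0$, $w$ is continuous, $w = 0$ on $\R^d\setminus B(0,R)$, and $\nu_n = \sigma_n + \Delta u_n \to \mu := \sigma + \frac{1}{2d}\Delta w$ in $\mathcal D'(\R^d)$. Passing the discrete relations to the limit: testing against nonnegative $\phi$ supported away from $\bdry B(0,R)$ shows that $\nu_n\le 0$ on $\hat B_R$ yields $\mu \le 0$ in $B(0,R)$, i.e. $\Delta w \le -2d\sigma$ there; and the complementarity ``$u_n(x)>0 \Rightarrow \nu_n(x)=0$'' gives, on any open $G\Subset\{w>0\}$, that $u_n>0$ on $G\cap\xi_n\Z^d$ for large $n$ and hence $\mu = 0$ on $G$, i.e. $\Delta w = -2d\sigma$ in $\{w>0\}$. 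Writing $V := U^\sigma - w$, these are precisely the relations of the complementarity system of Theorem~\ref{thm:partial-balayage-bdd-sol-fn-props} ($V\le U^\sigma$, $\Delta V\ge 0$ in $B(0,R)$, $V=U^\sigma$ off $B(0,R)$, $-\Delta V=0$ on $\{V<U^\sigma\}$), whose solution is unique by a standard maximum-principle argument (cf.~\cite{gustafsson-sakai-94}); hence $V = V^{\sigma,R}$ and $w = u^{\sigma,R}$. As every subsequential limit is the same, $u_n^\square\to u^{\sigma,R}$ locally uniformly, and with the reduction of the first paragraph this proves \eqref{eq:bdd-gds-conv-to-pb}. (Alternatively, the limit can be pinned down through the extremal descriptions $v_n=\sup\{\cdots\}$ in \eqref{eq:gds-def-obst-problem} and $V^{\sigma,R}=\sup\mathcal F^{\sigma,R}$: a locally uniform limit of discretely subharmonic functions is subharmonic, giving one inequality, while mollifying $V^{\sigma,R}$ and correcting it by an $O(\xi_n^2)$ quadratic term---so that its lattice restriction is discretely subharmonic and still $\le U^{\sigma_n}$---gives the other.)

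I expect the main obstacle to be exactly this stability of the obstacle problem under discretisation: combining the uniform potential convergence $U^{\sigma_n}_{\xi_n}\to U^\sigma$ with enough uniform regularity of $u_n$ up to the fixed Dirichlet boundary $\bdry\hat B_R$ to extract a convergent subsequence, and then passing the complementarity (or the sup-characterisation) to the limit. The delicate feature here, absent from the bulk estimates, is that $\bal_R(\sigma,0)$ may carry a surface-measure part on $\bdry B(0,R)$, so $u^{\sigma,R}$ is in general only Lipschitz there and one cannot hope for more than a Lipschitz/H\"older compactness bound near the boundary; apart from that, the argument runs parallel to the one for the standard divisible sandpile in \cite{levine-07,levine-peres-09}.
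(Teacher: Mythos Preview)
Your proposal is correct and largely parallels the paper, but the core step---convergence of the odometers---is handled differently. The paper (Lemma~\ref{lemma:conv-of-odometers-in-bdd-scaling-limit}) proves $(v_n)^\square \to V^{\sigma,R}$ by a direct two-sided sandwich: for one inequality it mollifies $V^{\sigma,R}$, adds the quadratic correction $A\xi_n|x|^2/6$ to make the restriction discretely subharmonic, and caps it by $U_n^{\sigma_n}$ outside $\hat B_R^{(n)}$ to obtain a competitor in \eqref{eq:gds-def-obst-problem}; for the other it takes the continuous potential of $-\Delta(v_n\chi_{\hat B_{R'}})$ to produce a competitor in the continuous obstacle problem. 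This yields \emph{pointwise} convergence, which is all that Lemma~\ref{lemma:conv-of-discr-Laplacian-in-distr-sense} needs. Your main route instead establishes uniform bounds and equicontinuity (via the barrier $\|\sigma\|_\infty((R+1)^2-|x|^2)$, the observation that $u_n$ is discretely harmonic in the annulus where $\sigma_n=0$, and discrete regularity estimates), extracts a subsequential limit by Arzel\`a--Ascoli, and then identifies it through the complementarity system of Theorem~\ref{thm:partial-balayage-bdd-sol-fn-props} plus uniqueness. Both are valid; the paper's direct comparison is shorter and sidesteps any appeal to discrete boundary-regularity estimates near $\partial\hat B_R$, while your compactness--uniqueness argument gives the stronger conclusion of locally uniform convergence and makes the role of the complementarity structure more transparent. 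Your parenthetical ``alternative'' at the end is essentially the paper's argument.
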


\noindent To prove this theorem we need a few lemmas.

\begin{lemma}
  \label{lemma:unif-conv-potentials-compact-subsets}
  Let $U^{\sigma_n}_n \equiv U^{\sigma_n}_{\xi_n}$ be the discrete
  potential of $\sigma_n$, defined on $\xi_n \Z^d$, let
  $(U^{\sigma_n}_n)^\square$ be its extension to $\R^d$ as a step
  function and let $U^\sigma$ be the potential of the measure
  $\sigma(x) \d x$. Then $(U^{\sigma_n}_n)^\square \to U^\sigma$
  uniformly on compact subsets of $\R^d$ as $n \to \infty$.
\end{lemma}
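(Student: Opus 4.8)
The plan is to establish the uniform convergence $(U^{\sigma_n}_n)^\square \to U^\sigma$ on compact sets by comparing both potentials to the convolution $N * \sigma$, exploiting the fact that the discrete Green's function $g_{\xi_n}$ is, up to an explicitly known additive constant in dimension two and up to a lower-order error in all dimensions, a discretization of the Newton kernel. Concretely, I would first recall from the literature on lattice Green's functions (e.g.\ Lawler--Limic, Lawler) the asymptotic expansion $\gamma_1(x,y) = c_d |x-y|^{2-d} + O(|x-y|^{-d})$ for $d \geq 3$ and $\gamma_0(x,y) = -\tfrac{2}{\pi}\log|x-y| + \kappa + O(|x-y|^{-2})$ for $d = 2$, where $c_d$ and $\kappa$ are explicit constants; combined with the definition of $g_\xi$ this yields, after rescaling, that $\xi_n^d g_{\xi_n}(x,y) \to N(x-y)$ (with the chosen normalization $-\Delta U^\mu = 2d\,\mu$, the relevant multiple of the Newton kernel) locally uniformly away from the diagonal, with an error that is integrable near the diagonal uniformly in $n$.

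Next I would split the difference $(U^{\sigma_n}_n)^\square(x) - U^\sigma(x)$ into two pieces: a \emph{Riemann-sum error}, comparing $\xi_n^d \sum_{y} N(x^{::} - y)\sigma_n(y)$ with $\int N(x-y)\sigma(y)\,dy$, and a \emph{kernel error}, comparing $\xi_n^d \sum_y g_{\xi_n}(x^{::},y)\sigma_n(y)$ with $\xi_n^d \sum_y N(x^{::}-y)\sigma_n(y)$. For the Riemann-sum error, the boundedness and almost-everywhere continuity of $\sigma$, its compact support contained in $B(0,R)$, and the local integrability of the singularity of $N$ give convergence to zero uniformly for $x$ in a compact set: away from $\supp\sigma$ the integrand is smooth and the Riemann sums converge uniformly, while near the support one controls the contribution of the small neighbourhood of $x$ using $\|\sigma\|_\infty$ and $\int_{|z|<\delta}|N(z)|\,dz \to 0$ as $\delta \to 0$. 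For the kernel error one uses the uniform-in-$n$ integrable bound on $|\xi_n^d g_{\xi_n}(x^{::},y) - N(x^{::}-y)|$ together with $\|\sigma_n\|_\infty \leq \|\sigma\|_\infty$ and the fixed compact support to conclude it also tends to zero uniformly in $x$.

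The main obstacle I anticipate is the treatment of the diagonal, i.e.\ the region where $y$ is close to $x^{::}$: here the continuum kernel $N$ is singular, the discrete Green's function $g_{\xi_n}$ has an $O(\log\xi_n)$ (in $d=2$) or $O(\xi_n^{2-d})$ (in $d\geq 3$) ``self-energy'' term, and the step-function extension introduces a mismatch of order $\xi_n$ between $x$ and $x^{::}$. The key quantitative input needed is a bound of the form $\bigl|\xi_n^d g_{\xi_n}(x^{::},y) - N(x^{::}-y)\bigr| \leq C\,\xi_n^{2}\,|x^{::}-y|^{-d}$ (with an appropriate interpretation, including a bounded term, when $x^{::}=y$), which follows from the cited lattice Green's function asymptotics; integrating this against the bounded density over the fixed ball $B(0,R)$ gives a bound that vanishes as $\xi_n \to 0$. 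Once the diagonal is handled, everything else is a routine application of dominated convergence and the standard fact that Riemann sums of a bounded, a.e.-continuous, compactly supported function converge, so I would organize the write-up so that the delicate estimate near the diagonal is isolated into a single displayed inequality and the rest follows by the triangle inequality.
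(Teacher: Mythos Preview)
Your plan is sound and matches what the paper does: the paper gives no self-contained argument for this lemma but simply refers to Lemmas~2.16(i) and~2.22 of Levine--Peres~\cite{levine-peres-10}, whose proofs proceed exactly along the lines you describe---use the Lawler/Lawler--Limic asymptotics for the lattice Green's function to replace $g_{\xi_n}$ by the Newton kernel up to a controllable error, and then treat the remaining discrepancy as a Riemann-sum approximation of the convolution $N*\sigma$, with the near-diagonal contribution handled via the local integrability of $N$ and the uniform bound $\|\sigma_n\|_\infty\le\|\sigma\|_\infty$. One small slip: it is $g_{\xi_n}(x,y)$ itself (not $\xi_n^d g_{\xi_n}(x,y)$) that approximates the suitably normalized Newton kernel, the factor $\xi_n^d$ being the volume element in the Riemann sum; also be aware that in $d=2$ the asymptotic for $\gamma_0$ carries an additive constant $\kappa$, so depending on your normalization of $N$ the limit may be $U^\sigma$ shifted by $\kappa\!\int\!\sigma$, which is harmless for all later uses (only differences $U^\sigma-V^\sigma$ enter) but worth tracking explicitly.
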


For the proof of
Lemma~\ref{lemma:unif-conv-potentials-compact-subsets} we refer to the
proofs of Lemma~2.16\,(i) and Lemma~2.22 in~\cite{levine-peres-10}
which, although there stated with slightly different assumptions than
the ones in this paper, go through in our setting as well, with more
or less only notational changes.

\begin{lemma}
  \label{lemma:conv-of-odometers-in-bdd-scaling-limit}
  Let $u_n$ be the limiting odometer function for the generalized
  divisible sandpile on $\xi_n \Z^d$ from mass configuration
  $\sigma_n$, and let $u = U^\sigma - V^{\sigma}$ be the modified
  Schwarz potential of $\bal_R(\sigma, 0)$ as in
  Remark~\ref{remark:mod-schwarz-potential}, with $\sigma$ and
  $\sigma_n$ as in Theorem~\ref{thm:bdd-gds-conv-to-pb}. Then for
  every $x \in \R^d$, $(u_n)^\square(x) \to u(x)$ pointwise as
  $n \to \infty$.
\end{lemma}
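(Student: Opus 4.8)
The plan is to establish pointwise convergence $(u_n)^\square(x) \to u(x)$ by combining the obstacle-problem characterizations of both sides. On the discrete side, Proposition~\ref{prop:gds-as-obst-problem} tells us that $u_n = U^{\sigma_n}_{n} - v_n$, where $v_n$ solves the discrete obstacle problem \eqref{eq:gds-def-obst-problem} with data $U^{\sigma_n}_n$ and active region $\hat{B}_R \cap \xi_n \Z^d$; on the continuous side, the modified Schwarz potential is $u = U^\sigma - V^\sigma$ with $V^\sigma = \sup \mathcal{F}^{\sigma,R}$ the solution of the continuous obstacle problem of Theorem~\ref{thm:partial-balayage-bdd-sol-fn-props}. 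Since Lemma~\ref{lemma:unif-conv-potentials-compact-subsets} already gives $(U^{\sigma_n}_n)^\square \to U^\sigma$ uniformly on compact subsets of $\R^d$, it suffices to show that the obstacle-problem solutions converge, i.e. $(v_n)^\square \to V^\sigma$ pointwise (indeed locally uniformly). The strategy is thus: (1) transfer uniform control of the obstacles to uniform control of the solutions, and (2) identify the limit of $(v_n)^\square$ as the unique solution $V^\sigma$.

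First I would set up uniform bounds. Since $\sigma$ is bounded with compact support and $\int \sigma < 0$, the potentials $U^{\sigma_n}_n$ are uniformly bounded on $\hat{B}_R$ (uniformly in $n$), and the energy bound from Proposition~\ref{prop:bounded-gds} — together with the explicit identity $\sum_x u_n(x)\,\xi_n^d \le (R+2)^2 M$ — shows the total mass of $\nu_n = \sigma_n + \Delta u_n$ stays bounded. This gives equicontinuity-type estimates: the functions $(u_n)^\square$ (equivalently $(v_n)^\square$) are uniformly bounded and, via standard interior estimates for discrete harmonic/subharmonic functions (the $u_n$ are discrete superharmonic on $\{u_n>0\}$ and the $v_n$ are discrete subharmonic in $\hat{B}_R$), are equi-Lipschitz on compact subsets of $B(0,R)$ and controlled near $\bdry B(0,R)$ via the boundary condition $v_n = U^{\sigma_n}_n$ there. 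By Arzelà–Ascoli, every subsequence has a further subsequence along which $(v_n)^\square$ converges locally uniformly to some limit $W \le U^\sigma$ with $\Delta W \ge 0$ in $B(0,R)$ in the distributional sense (discrete subharmonicity passes to the limit), hence $W \in \mathcal{F}^{\sigma,R}$, giving $W \le V^\sigma$.

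For the reverse inequality $W \ge V^\sigma$, the idea is to approximate: given any $f \in \mathcal{CS}$-type competitor with $f \le U^\sigma$ and $\Delta f \ge 0$ in $B(0,R)$, one restricts a slightly shrunk and mollified version $f_\epsilon$ to the lattice; because $U^{\sigma_n}_n \to U^\sigma$ uniformly and $f$ is continuous, for large $n$ one has $f_\epsilon^{::} \le U^{\sigma_n}_n$ on $\xi_n\Z^d$ (up to a vanishing error one absorbs by subtracting a small constant), and $f_\epsilon$ is discrete subharmonic where it is smooth and subharmonic in the continuum. Hence $f_\epsilon^{::}$ is (essentially) a competitor in \eqref{eq:gds-def-obst-problem}, so $v_n \ge f_\epsilon^{::} - o(1)$; passing to the limit and then letting $\epsilon \to 0$ yields $W \ge f$, and taking the supremum over $f$ gives $W \ge V^\sigma$. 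Therefore $W = V^\sigma$; since the limit is the same along every subsequence, the full sequence $(v_n)^\square$ converges to $V^\sigma$ locally uniformly, and consequently $(u_n)^\square = (U^{\sigma_n}_n)^\square - (v_n)^\square \to U^\sigma - V^\sigma = u$ pointwise (in fact locally uniformly on $B(0,R)$, and trivially outside where both sides vanish).

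The main obstacle I anticipate is step (2)'s approximation argument done cleanly near $\bdry B(0,R)$: one must ensure the shrinking/mollification of competitors $f$ does not destroy the inequality $f_\epsilon \le U^\sigma$ globally (not just inside $B(0,R)$) and that the discrete subharmonicity of $f_\epsilon^{::}$ holds on all of $\hat{B}_R$ including lattice points adjacent to the boundary sphere, where the continuum Laplacian of a mollified subharmonic function and its discrete counterpart differ by an $O(\xi_n^2)$ error that must be shown to be dominated. A secondary technical point is justifying the equi-Lipschitz bounds for $(v_n)^\square$ uniformly up to (a fixed compact subset of) the boundary, which relies on the regularity of $U^\sigma$ — continuous since $\sigma \in L^\infty$ with compact support — and on the classical fact that the obstacle-problem solution inherits the modulus of continuity of the obstacle. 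Both are standard but need care to phrase discretely; everything else is bookkeeping with the identities already established in Propositions~\ref{prop:bounded-gds} and~\ref{prop:gds-as-obst-problem}.
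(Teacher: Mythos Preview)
Your reduction to showing $(v_n)^\square \to V^\sigma$ matches the paper exactly, but from there the routes diverge. The paper proceeds by direct two-sided comparison without any compactness argument: for the bound $(v_n)^\square \geq V^\sigma - O(\epsilon)$ it mollifies $V^\sigma$ itself, adds an explicit quadratic correction $A\xi_n|x|^2/6$ to force discrete subharmonicity of the restriction, and patches with $U_n^{\sigma_n}$ outside $\hat{B}_R^{(n)}$ to obtain a concrete discrete competitor $\Phi_n$; for the reverse bound $(v_n)^\square \leq V^\sigma + O(\epsilon)$ it goes in the other direction, forming the \emph{continuous} potential $U^{(\psi_n)^\square}$ of the step-function extension of $\psi_n := -\Delta(v_n\chi_{\hat{B}_{R'}})$, showing (via an argument from Levine--Peres) that this is uniformly close to $(v_n)^\square$, and then patching with $U^\sigma$ outside $B(0,R)$ to get a continuous competitor $\Psi_n$. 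Your compactness-plus-identification strategy is conceptually cleaner and makes the upper bound almost free (any subsequential limit $W$ is automatically a competitor for $V^\sigma$), but it relocates all the difficulty into the equi-Lipschitz/equicontinuity estimate needed for Arzel\`a--Ascoli, which you flag but do not prove; the paper's constructive argument bypasses that issue entirely. Your lower-bound step (mollify a continuous competitor, add a correction, restrict to the lattice) is essentially the same mechanism the paper uses in its first half, except that the paper applies it only to the extremal function $V^\sigma$ rather than to an arbitrary $f$, which slightly simplifies the boundary patching you worry about.
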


\begin{proof}
  Let us first restrict the problem slightly. We know that the
  function $u$ is zero on the complement of $B(0,R)$, and for each $n$
  we also know that the odometer function $u_n$ is zero outside of the
  set $\hat{B}_R^{(n)} := B(0,R) \cap (\xi_n \Z^d)$. For any
  $x \notin B(0,R)$ it therefore follows that for all $n$ large enough
  we have $(u_n)^\square(x) = u_n(x^{::}) = 0 = u(x)$. The set we have
  to study in detail is thus $B(0,R)$. The slightly more challenging
  part of the proof is thus the convergence for $x$ in the set
  $B(0,R)$.

  We mainly repeat the arguments made in the proof of Lemma~3.8
  in~\cite{levine-peres-10}, with a few modifications due to the fact
  that we here work in a slightly different setting, being bounded to
  the set $B(0,R)$. As a first step, we use that
  $u = U^\sigma - V^{\sigma, R}$, $u_n = U_n^{\sigma_n} - v_n$ along
  with the convergence $(U_n^{\sigma_n})^\square \to U^\sigma$ from
  Lemma~\ref{lemma:unif-conv-potentials-compact-subsets} to conclude
  that it suffices to show that $(v_n)^\square(x) \to V^{\sigma}(x)$
  for all $x \in B(0,R)$, where
  \begin{align}
    \label{eq:main-lemma--cont-obst-problem-solution}
    V^{\sigma, R}(x) = \sup\{ f(x) : f \in \mathcal{CS}(B(0,R)),\ f \leq U^\sigma \text{ in } \R^d\},
  \end{align}
  the set $\mathcal{CS}(B(0,R))$ is the set of functions on $\R^d$
  that are continuous and subharmonic on $B(0,R)$, and
  \begin{align}
    \label{eq:main-lemma--disc-obst-problem-solution}
    v_n(x) := \sup\{f(x) : \Delta f \geq 0 \text{ in }
    \hat{B}_R^{(n)},\ f \leq U_n^{\sigma_n} \text{ in } \xi_n \Z^d\}.
  \end{align}
  The method we will employ will in essence be to construct help
  functions that are comparable to $V^{\sigma, R}$ and $v_n$,
  respectively, but have discrete or continuous analogues that are
  competing functions in the obstacle problems
  \eqref{eq:main-lemma--cont-obst-problem-solution} and
  \eqref{eq:main-lemma--disc-obst-problem-solution}, thereby allowing
  us to conclude both $V^{\sigma, R}(x) \leq (v_n)^\square(x)$ and
  ${(v_n)^\square(x) \leq V^{\sigma, R}(x)}$ for $n$ large enough.

  Let $\epsilon > 0$ be arbitrary but fixed. We want to show that
  \begin{align*}
    (v_n)^\square(x) \geq V^{\sigma}(x)
  \end{align*}
  holds for all $n$ large enough and all $x \in B(0,R)$. For any
  $h > 0$ let ${\tilde{V}^\sigma := \mathbf{J}_h V^\sigma}$ be the
  mollification of $V^\sigma$ (for instance as
  in~\cite[Section~3.5]{helms-14}):
  \begin{align*}
    \tilde{V}^\sigma(x) := \mathbf{J}_h V^\sigma(x) = \frac{1}{h^d}
    \int_{\R^d} V^\sigma(y) m \left( \frac{x-y}{h} \right) \d y, 
  \end{align*}
  where $m(y) = C \exp(-1/(1-|y|^2))$ if $|y| < 1$ and zero otherwise,
  with $C$ such that $\int m(y) \d y = 1$. By taking $h$ small enough
  we obtain $|V^\sigma - \tilde{V}^\sigma| < \epsilon$ on $B(0,R-h)$,
  in particular $\tilde{V}^\sigma(x) + \epsilon > V^\sigma(x)$ for all
  $x \in B(0, R-h)$. We will construct our helper function from the
  discretization $(\tilde{V}^{\sigma})^{::}$ of $\tilde{V}^{\sigma}$,
  and need to relate the discrete Laplacian of
  $(\tilde{V}^{\sigma})^{::}$ to the continuous Laplacian of
  $\tilde{V}^\sigma$ (which is well-defined since $\tilde{V}^\sigma$
  is infinitely differentiable). In general, a straightforward
  calculation (for instance in~\cite[Lemma~2.20]{levine-peres-10})
  shows that if $f \in C^\infty(D)$ on some open set $D \subset \R^d$,
  $A$ is a bound for the third derivative of $f$ in $D$, and
  $x \in D \cap \xi \Z^d$ with $B(x, \xi) \subset D$, then
  \begin{align*}
    |\Delta f(x) - 2d \Delta f^{::}(x)| \leq \frac{A d}{3} \xi.
  \end{align*}
  For any fixed value of $n$, note that we can always choose $h > 0$
  small enough so that the set $\hat{B}_R^{(n)}$ is contained in
  $B(0, R - h)$. Let $A$ be a bound for the third partial derivatives
  of $\tilde{V}^\sigma$ in $B(0,R-h)$, and let
  $\phi_n : \xi_n \Z^d \to \R$ be defined by
  \begin{align*}
    \phi_n(x) := (\tilde{V}^\sigma)^{::}(x) + \frac{A \xi_n
    |x|^2}{6}.
  \end{align*}
  It follows that for all $x \in \hat{B}_R^{(n)}$,
  $\Delta \phi_n(x) \geq \Delta \tilde{V}^\sigma(x) / 2d$. However,
  $V^\sigma$ is subharmonic in $B(0,R)$, hence $\tilde{V}^\sigma$ is
  subharmonic in $B(0, R-h)$, and thus $\phi_n$ is (discrete)
  subharmonic in $\hat{B}_R^{(n)}$. If $n$ is large enough then the
  term $A \xi_n |x|^2 / 6$ is strictly less than $\epsilon$ in
  $\hat{B}_R^{(n)}$, from which it follows that
  \begin{align*}
    \phi_n(x) < (V^\sigma)^{::}(x) + 2 \epsilon
  \end{align*}
  for all $x \in \hat{B}_R^{(n)}$. Since $V^\sigma$ is bounded from
  above by $U^\sigma$, and we again use the property
  $|(U_n^{\sigma_n})^\square - U^\sigma| < \epsilon$ for all $n$ large
  enough by Lemma~\ref{lemma:unif-conv-potentials-compact-subsets}, we
  obtain
  \begin{align*}
    \phi_n(x) - 3 \epsilon \leq U_n^{\sigma_n}(x)
  \end{align*}
  for all $x \in \hat{B}_R^{(n)}$. Now define $\Phi_n : \xi_n \Z^d \to
  \R$ via
  \begin{align*}
    \Phi_n(x) := \left\{
    \begin{array}{l l}
      \phi_n(x) - 3 \epsilon & \text{ if } x \in \hat{B}_R^{(n)}, \\[.2cm]
      U_n^{\sigma_n}(x) & \text{ otherwise}.
    \end{array}
        \right.
  \end{align*}
  It follows that $\Phi_n$ is a lattice function that is subharmonic
  on $\hat{B}_R^{(n)}$ and satisfies $\Phi_n \leq U_n^{\sigma_n}$
  everywhere on $\xi_n \Z^d$. The function $\Phi_n$ is thus a
  competing element in the obstacle problem
  \eqref{eq:main-lemma--disc-obst-problem-solution}, hence
  $\Phi_n \leq v_n$ holds everywhere on $\xi_n \Z^d$. For any $x \in
  B(0,R)$ we can now conclude that
  \begin{align}
    (v_n)^\square(x) = v_n(x^{::}) &\geq \Phi_n(x^{::}) \geq
                                     (\tilde{V}^\sigma)^{::}(x^{::}) - 3 \epsilon \nonumber \\
                                   &> V^\sigma(x^{::}) - 4 \epsilon > V^\sigma(x) - 5 \epsilon, \label{eq:main-lemma--lower-bound}
  \end{align}
  where we in the last equality used that for all $n$ large enough we
  have ${|(V^\sigma)^\square - V^\sigma| < \epsilon}$.

  For the converse result, we again repeat the techniques in the proof
  of Lemma~3.8 in~\cite{levine-peres-10}: let $R' > R$ and introduce
  the function $\psi_n : \xi_n \Z^d \to \R$ defined by
  \begin{align*}
    \psi_n(x) := - \Delta (v_n \chi_{\hat{B}_{R'}})(x).
  \end{align*}
  On the one hand we have
  $U_n^{\psi_n}(x) = v_n(x) \chi_{\hat{B}_{R'}}(x)$ ($ = v_n(x)$ for
  $x \in \hat{B}_{R'}$). It can be shown (see the proof of Lemma~3.7
  in~\cite{levine-peres-10}, the same methods apply here) that we have
  a similar property for $x \in B(0,R)$ if we try to take the
  continuous potential of the function $(\psi_n)^\square$ considered
  as a measure on $\R^d$ (in the sense that
  $d(\psi_n)^\square(x) = (\psi_n)^\square(x) \d m(x)$, where $m$ is the
  Lebesgue measure on $\R^d$): for any $\epsilon > 0$ we have 
  $|(v_n)^\square(x) - U^{(\psi_n)^\square}(x)| < \epsilon$ for all
  $x \in B(0,R)$ if $n$ is large enough. Assuming $n$ is also large
  enough for $|(U_n^{\sigma_n})^\square(x) - U^\sigma(x)| < \epsilon$
  to hold for all $x \in B(0,R)$, we obtain
  \begin{align*}
    U^{(\psi_n)^\square}(x) < (v_n)^\square(x) + \epsilon \leq
    (U_n^{\sigma_n})^\square(x) + \epsilon < U^\sigma(x) + 2 \epsilon.
  \end{align*}
  Let $\Psi_n : \R^d \to \R$ be defined by
  \begin{align*}
    \Psi_n(x) := \left\{
    \begin{array}{l l}
      U^{(\psi_n)^\square}(x) - 2 \epsilon & \text{ if } x \in B(0,R),
      \\[.2cm]
      U^\sigma(x) & \text{ otherwise}.
    \end{array}
        \right.
  \end{align*}
  The function $\Psi_n$ is then subharmonic and continuous in
  $B(0,R)$, since $(\psi_n)^\square$ is non-positive there and
  bounded. By the above, we clearly also have ${\Psi_n \leq U^\sigma}$
  everywhere in $\R^d$. It immediately follows that $\Psi_n$ is a
  competing function in the obstacle problem
  \eqref{eq:main-lemma--cont-obst-problem-solution}, hence satisfies
  $\Psi_n(x) \leq V^\sigma(x)$ everywhere on $\R^d$. In particular,
  for $x \in B(0,R)$ this implies
  \begin{align}
    \label{eq:main-lemma--upper-bound}
    (v_n)^\square(x) < U^{(\psi_n)^\square}(x) + \epsilon = \Psi_n(x)
    + 3 \epsilon \leq V^\sigma(x) + 3 \epsilon.
  \end{align}
  Finally, combining \eqref{eq:main-lemma--lower-bound} and
  \eqref{eq:main-lemma--upper-bound}, we can conclude that if
  $\epsilon' > 0$ is arbitrary and $x \in B(0,R)$ then there exists
  $N$ such that we for all $n > N$ have both
  $(v_n)^\square(x) > V^\sigma(x) - \epsilon'$ and
  $(v_n)^\square(x) < V^\sigma(x) + \epsilon'$, \ie precisely
  \begin{align*}
    |(v_n)^\square(x) - V^\sigma(x)| < \epsilon',
  \end{align*}
  which completes the proof.
\end{proof}

\begin{lemma}
  \label{lemma:conv-of-discr-Laplacian-in-distr-sense}
  Let $\{g_n\}_{n=1}^\infty$ be a sequence of functions with
  $g_n : \xi_n \Z^d \to \R$, for some fixed lattice constants
  $\xi_n$ satisfying $\xi_n \searrow 0$ as $n \to \infty$, and
  assume that $(g_n)^\square(x)$ converges to $g(x)$ for every
  $x \in \R^d$ for some function $g \in
  L^1_{\mathrm{loc}}(\R^d)$. Then
  $(\Delta g_n)^\square \to \Delta g / 2d$ in the sense of
  distributions as $n \to \infty$.
\end{lemma}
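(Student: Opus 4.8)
The plan is to check the distributional convergence directly against an arbitrary test function $\varphi \in C_c^\infty(\R^d)$, the idea being to move the discrete Laplacian off $g_n$ and onto a discretization of $\varphi$ by discrete summation by parts, after which the smoothness of $\varphi$ takes care of everything. Since $(\Delta g_n)^\square$ is by definition the step function equal to $\Delta g_n(y)$ on each cube $y^\square$, integrating against $\varphi$ gives
\begin{align*}
  \int_{\R^d} (\Delta g_n)^\square(x)\, \varphi(x) \d x = \xi_n^d \sum_{y \in \xi_n \Z^d} \Delta g_n(y)\, \bar\varphi_n(y), \qquad \bar\varphi_n(y) := \frac{1}{\xi_n^d}\int_{y^\square}\varphi(x)\d x.
\end{align*}
Because $\bar\varphi_n$ has finite support, the discrete Laplacian is self-adjoint for this sum (a one-line rearrangement of the finite double sum $\sum_y \sum_{y'\sim y}$), so the right-hand side equals $\xi_n^d \sum_y g_n(y)\,\Delta\bar\varphi_n(y) = \int_{\R^d} (g_n)^\square(x)\,(\Delta\bar\varphi_n)^\square(x)\d x$; note this identity holds without any assumption on the size or support of $g_n$.

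Next I would pin down the limit of the discretized test function. A Taylor expansion of $\varphi$ over the symmetric cube $y^\square$ (the odd-order terms integrate to zero) shows $\bar\varphi_n = \varphi^{::} + O(\xi_n^2)$ with the correction still of size $O(\xi_n^2)$ after applying the discrete Laplacian; together with the comparison estimate $|\Delta f(x) - 2d\,\Delta f^{::}(x)| \le \tfrac{Ad}{3}\xi$ recalled above (with $A$ a bound for the third derivatives of $f$), this yields
\begin{align*}
  (\Delta\bar\varphi_n)^\square \longrightarrow \frac{1}{2d}\,\Delta\varphi \quad \text{uniformly on } \R^d,
\end{align*}
with the supports of all the functions $(\Delta\bar\varphi_n)^\square$ contained in one fixed compact set $K$ (say an $\xi_1$-neighbourhood of $\supp\varphi$). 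It then remains to let $n\to\infty$ in $\int_K (g_n)^\square\,(\Delta\bar\varphi_n)^\square \d x$; writing this as
\begin{align*}
  \int_K (g_n)^\square \bigl[(\Delta\bar\varphi_n)^\square - \tfrac{1}{2d}\Delta\varphi\bigr]\d x + \frac{1}{2d}\int_K (g_n)^\square\,\Delta\varphi \d x,
\end{align*}
the goal is to see that the first integral tends to $0$ and the second to $\tfrac{1}{2d}\int_K g\,\Delta\varphi \d x = \langle \tfrac{1}{2d}\Delta g,\varphi\rangle$, which is the claimed identity.

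The step I expect to be the main obstacle is precisely this last limit passage. Pointwise convergence $(g_n)^\square \to g$ together with $g \in L^1_{\mathrm{loc}}(\R^d)$ does not by itself control $\int_K |(g_n)^\square|\d x$ --- a tall, narrow bump of fixed mass whose position drifts is pointwise null but does not vanish in $L^1$ --- so some additional local control on the $g_n$ is needed: a uniform local $L^\infty$ bound, or uniform integrability on $K$, either of which makes the first integral vanish (by the uniform convergence of the bracket) and the second converge (by dominated convergence). In every application of this lemma in the present paper the relevant $g_n$ are odometer functions, which are uniformly bounded and supported in a fixed ball thanks to the estimates in the proof of Proposition~\ref{prop:bounded-gds}, so this extra hypothesis is automatic; under it the argument above goes through and delivers $(\Delta g_n)^\square \to \tfrac{1}{2d}\Delta g$ in $\mathcal{D}'(\R^d)$.
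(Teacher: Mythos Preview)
Your approach is essentially the paper's: both compute $\langle(\Delta g_n)^\square,\varphi\rangle$ as a lattice sum, shift the discrete Laplacian onto (cube averages of) $\varphi$ by summation by parts, and then pass to the limit. The paper performs exactly the same rearrangement---arriving at
\[
\frac{1}{2d}\int_{\R^d} (g_n)^\square(x)\sum_{k=1}^d \frac{\varphi(x+\xi_n e_k)-2\varphi(x)+\varphi(x-\xi_n e_k)}{\xi_n^2}\d x
\]
---and then simply writes ``the last convergence follows from the dominated convergence theorem'', without further comment.

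You are right to single out this final limit as the delicate step, and more careful than the paper in doing so. The hypotheses as stated (pointwise convergence of $(g_n)^\square$ to some $g\in L^1_{\mathrm{loc}}$) do \emph{not} supply a dominating function, and your drifting-bump objection is a genuine counterexample to the lemma as written: taking $g_n$ to be $\xi_n^{-d}$ at a single lattice point $y_n\to 0$ (with $y_n\neq 0$) and zero elsewhere gives $(g_n)^\square\to 0$ pointwise, yet the computation above yields $\langle(\Delta g_n)^\square,\varphi\rangle\to \tfrac{1}{2d}\Delta\varphi(0)\neq 0$. So the paper's invocation of dominated convergence is a gap, not merely an omission of routine detail. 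Your proposed remedy---a uniform local bound on the $(g_n)^\square$, which the odometers in the only application (Theorem~\ref{thm:bdd-gds-conv-to-pb}) do satisfy via the estimates in Proposition~\ref{prop:bounded-gds}---is exactly what is needed to make both arguments go through.
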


\begin{proof}
  Let $\varphi \in C^\infty_0(\R^d)$ be an arbitrary test function. We
  obtain
  \begin{align*}
    &\left< (\Delta g_n)^\square, \varphi \right> = \int_{\R^d} (\Delta g_n)(x^{::}) \varphi(x) \d x = \sum_{y \in \xi_n \Z^d} \int_{y^\square} (\Delta g_n)(y) \varphi(x) \d x \\ 
    &\quad = \frac{1}{2d \xi_n^2} \sum_{y \in \xi_n \Z^d} \sum_{z \sim y} (g_n(z) - g_n(y)) \int_{y^\square} \varphi(x) \d x \\
    &\quad = \frac{1}{2d \xi_n^2} \sum_{y \in \xi_n \Z^d} \sum_{k = 1}^d (g_n (y + \xi_n e_k) - 2 g_n(y) + g_n(y - \xi_n e_k)) \int_{y^\square} \varphi(x) \d x
  \end{align*}
  Utilizing that we sum over the entire lattice $\xi_n \Z^d$ we can
  rewrite this last expression as
  \begin{align*}
    \frac{1}{2d \xi_n^2} &\sum_{y \in \xi_n \Z^d} g_n(y) \int_{y^\square} \sum_{k = 1}^d (\varphi(x + \xi_n e_k) - 2 \varphi(x) + \varphi(x - \xi_n e_k)) \d x \\
                         &= \frac{1}{2d} \int_{\R^d} g_n(x^{::}) \sum_{k = 1}^d \frac{\varphi(x + \xi_n e_k) - 2 \varphi(x) + \varphi(x - \xi_n e_k)}{\xi_n^2} \d x \\
                         &\to \frac{1}{2d} \int_{\R^d} g(x) \sum_{k=1}^d \frac{\partial^2 \varphi}{\partial x_k^2} \d x = \left< \frac{1}{2d} \Delta g, \varphi \right> \text{ as } n \to \infty,
  \end{align*}
  where the last convergence follows from the dominated convergence
  theorem.
\end{proof}

\begin{proof}[Proof of Theorem~\ref{thm:bdd-gds-conv-to-pb}]
  We know that $\gds^{\xi_n}_R(\sigma_n) = \sigma_n + \Delta u_n$, so
  that
  \begin{align*}
    \gds_R^{\xi_n}(\sigma_n) = (\gds^{\xi_n}_R(\sigma_n))^\square =
    (\sigma_n)^\square + (\Delta u_n)^\square.
  \end{align*}
  Since $\{\sigma_n\}_{n=1}^\infty$ is assumed to be a discretization of $\sigma$,
  we have $(\sigma_n)^\square \to \sigma$ as $n \to \infty$.
  Lemma~\ref{lemma:conv-of-discr-Laplacian-in-distr-sense} combined
  with Lemma~\ref{lemma:conv-of-odometers-in-bdd-scaling-limit} yields
  $(\Delta u_n)^\square \to \Delta u / 2d$. It follows that
  \begin{align*}
    \gds_R^{\xi_n}(\sigma_n) = (\sigma_n)^\square + (\Delta u_n)^\square \to
    \sigma + \frac{\Delta u}{2d} = \bal_R(\sigma, 0),
  \end{align*}
  as desired.
\end{proof}

\subsection{Boundary properties for large confining radii}
\label{sec:bound-prop-grow}
Given the recent development in~\cite{roos-15} of partial balayage in
an unrestricted setting (at least in the plane) described in
Section~\ref{sec:unrestr-part-balayage}, one might expect there to be
a result similar to Theorem~\ref{thm:bdd-gds-conv-to-pb} if we attempt
to study the limit $R \to \infty$. For instance, it is rather easy to
show that there is a sort of invariance in the choice of the confining
radius in the generalized divisible sandpile, in the sense that
successive applications of $\gds_\rho(\cdot)$ operators for, say,
first $\rho = R_1$ and then $\rho = R_2$ for some $0 < R_1 < R_2$,
yields the same result as if we would have used $\rho = R_2$ from the
start. In view of Theorem~\ref{thm:bdd-gds-conv-to-pb} this is
natural, since it is known that a similar iterative property holds for
$\bal_\rho(\cdot, 0)$~\cite[Theorem~2.2\,(iii)]{gustafsson-sakai-94}.
\begin{proposition}
  \label{prop:iterated-balayage}
  If $\sigma: \xi \Z^d \to \R$ is a generalized mass configuration and
  $R_1 > 0$ is such that $\supp \sigma \subset \hat{B}_{R_1}$, and
  $R_2 > R_1 + \xi$ is arbitrary, then
  \begin{gather*}
    \gds_{R_2}(\gds_{R_1}(\sigma)) = \gds_{R_2}(\sigma).
  \end{gather*}
\end{proposition}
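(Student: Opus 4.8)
The plan is to run everything through the obstacle-problem characterization of Proposition~\ref{prop:gds-as-obst-problem}. For any generalized mass configuration $\eta$ with $\supp\eta\subset\hat{B}_R$, that proposition, together with $\Delta U^\eta=-\eta$ and the identity $-U^{\Delta u}=u$ valid for finitely supported $u$, gives
\[
  \gds_R(\eta)=\eta+\Delta u=\eta+\Delta\bigl(U^\eta-v_\eta^R\bigr)=-\Delta v_\eta^R,
  \qquad
  v_\eta^R(x):=\sup\{f(x):\Delta f\ge 0\text{ in }\hat{B}_R,\ f\le U^\eta\}.
\]
So the proposition reduces to comparing two obstacle solutions. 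First I would check that the left-hand side is even defined: writing $\nu_1:=\gds_{R_1}(\sigma)$, Proposition~\ref{prop:bounded-gds} gives $\supp(\nu_1)_-\subset\supp\sigma_-\subset\hat{B}_{R_1}$ and $\supp(\nu_1)_+\subset\bdry\hat{B}_{R_1}$; since any $y\in\bdry\hat{B}_{R_1}$ has a lattice neighbour in $B(0,R_1)$, it satisfies $|y|<R_1+\xi<R_2$, so $\supp\nu_1\subset\hat{B}_{R_2}$ and $\gds_{R_2}(\nu_1)$ makes sense ($\nu_1$ is still admissible, as $\gds_{R_1}$ preserves total mass). I would also record the exact identity $U^{\nu_1}=U^\sigma-u_1=v_\sigma^{R_1}=:v_1$, where $u_1$ is the odometer of the $R_1$-process: this follows from $\nu_1=\sigma+\Delta u_1$ and $u_1=U^\sigma-v_1$ (Proposition~\ref{prop:gds-as-obst-problem}), and it is an honest equality of functions because $u_1$ is finitely supported.

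Set $v_2:=v_\sigma^{R_2}$ and $w:=v_{\nu_1}^{R_2}=\sup\{f:\Delta f\ge 0\text{ in }\hat{B}_{R_2},\ f\le v_1\}$. By the display above it suffices to prove $v_2=w$, since then $\gds_{R_2}(\sigma)=-\Delta v_2=-\Delta w=\gds_{R_2}(\nu_1)=\gds_{R_2}(\gds_{R_1}(\sigma))$. The inequality $w\le v_2$ is immediate: any competitor $f$ in the definition of $w$ obeys $f\le v_1\le U^\sigma$ and $\Delta f\ge 0$ in $\hat{B}_{R_2}$, hence competes in the definition of $v_2$. For $v_2\le w$ I would use that the obstacle solution $v_2$ is itself subharmonic on $\hat{B}_{R_2}$ (the ``also the solution $v$ satisfies $\Delta v\ge 0$'' step in the proof of Proposition~\ref{prop:gds-as-obst-problem}); since $R_1<R_2$ gives $\hat{B}_{R_1}\subset\hat{B}_{R_2}$, $v_2$ is then subharmonic on $\hat{B}_{R_1}$ as well, and $v_2\le U^\sigma$, so $v_2$ is a competitor in the definition of $v_1$ and therefore $v_2\le v_1$. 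Combined with $\Delta v_2\ge 0$ on $\hat{B}_{R_2}$, this makes $v_2$ a competitor in the definition of $w$, so $v_2\le w$. Hence $v_2=w$.

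There is essentially no computation; the two places to be careful are bookkeeping. One is the inclusion $\bdry\hat{B}_{R_1}\subset\hat{B}_{R_2}$, which is exactly why the hypothesis $R_2>R_1+\xi$ is present and without which $\gds_{R_2}(\gds_{R_1}(\sigma))$ need not be defined. The other — the step I would be most careful with — is the exactness of $U^{\nu_1}=v_1$: one must note that $u_1$ is finitely supported (it lives in $\hat{B}_{R_1}$) so that $U^{\Delta u_1}=-u_1$ holds with no stray additive harmonic term, and likewise that $\sigma$ and $\nu_1$ are finitely supported so that all the discrete potentials involved are genuinely defined. Modulo these points the argument is just the discrete shadow of the iteration identity $\bal_{R_2}(\bal_{R_1}(\sigma,0),0)=\bal_{R_2}(\sigma,0)$ mentioned before the statement, with the discrete obstacle problems playing the role of $V^{\sigma,R}$.
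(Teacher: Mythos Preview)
Your proof is correct and follows essentially the same route as the paper: reduce to the obstacle-problem characterization, identify $U^{\nu_1}=v_1$, and show $v_2=w$ by checking that competitors for one problem are competitors for the other (using $v_1\le U^\sigma$ in one direction, and that $v_2$ is subharmonic on $\hat{B}_{R_2}\supset\hat{B}_{R_1}$ hence $v_2\le v_1$ in the other). The only cosmetic difference is that you prove $w\le v_2$ by noting every competitor for $w$ competes for $v_2$, whereas the paper uses that the solution $w$ itself is subharmonic and competes; both are fine.
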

\begin{proof}
  For sake of simplicity, we define the three mass configurations
  $\nu_1, \nu_2$ and $\tilde{\nu}$ via
  \begin{gather*}
    \nu_1 := \gds_{R_1}(\sigma) = -\Delta v_1, \\
    \nu_2 := \gds_{R_2}(\sigma) = -\Delta v_2,, \\
    \tilde{\nu} := \gds_{R_2}(\nu_1) = -\Delta \tilde{v},
  \end{gather*}
  where $v_1$, $v_2$ and $\tilde{v}$ are, by
  \eqref{eq:gds-def-obst-problem}, the solutions to the obstacle
  problems
  \begin{gather*}
    v_1(x) = \sup\{f(x) : \Delta f \geq 0 \text{ in } \hat{B}_{R_1},
    f \leq U^{\sigma} \text{ in } \xi\Z^d\}, \\
    v_2(x) = \sup\{f(x) : \Delta f \geq 0 \text{ in }
    \hat{B}_{R_2}, f \leq U^{\sigma} \text{ in } \xi\Z^d\}, \\
    \tilde{v}(x) = \sup\{f(x) : \Delta f \geq 0 \text{ in }
    \hat{B}_{R_2}, f \leq U^{\nu_1} \text{ in } \xi \Z^d\}.
  \end{gather*}
  Note that $\tilde{\nu} = \gds_{R_2}(\nu_1)$ is well-defined since
  $\nu_1$ is a mass configuration of negative total mass satisfying
  $\supp \nu_1 \subset (\bdry \hat{B}_{R_1} \cup \hat{B}_{R_1})
  \subset \hat{B}_{R_2}$ by our assumption on $R_2$. We claim that the
  solutions $\tilde{v}$ and $v_2$ above are in fact equal, from which
  $\tilde{\nu} = \nu_2$ clearly will follow, proving the proposition.

  First of all, by the definitions of $\tilde{v}$ and $v_1$ we see that
  \begin{align*}
    \tilde{v} \leq U^{\nu_1} = U^{-\Delta v_1} = v_1 \leq U^{\sigma}
  \end{align*}
  holds throughout $\xi\Z^d$. Moreover, as was seen in the proof of
  Proposition~\ref{prop:gds-as-obst-problem}, it is clear that
  $\Delta \tilde{v} \geq 0$ holds in $\hat{B}_{R_2}$. Combining this,
  we see that $\tilde{v}$ is a competing function in the definition of
  $v_2$, from which it follows that $\tilde{v} \leq v_2$ holds
  everywhere.

  For the contrary, we know similarly that $v_2 \leq U^{\sigma}$ holds
  everywhere and that $\Delta v_2 \geq 0$ holds in
  $\hat{B}_{R_2}$. But $\hat{B}_{R_2} \supset \hat{B}_{R_1}$ by
  assumption, hence $v_2$ is a competing function in the definition of
  $v_1$, yielding $v_2 \leq v_1$ everywhere. Since
  $v_1 = U^{-\Delta v_1} = U^{\nu_1}$ it thus follows that
  $v_2 \leq U^{\nu_1}$, and so we can conclude that $v_2$ is a
  competing function in the definition of $\tilde{v}$, finally
  yielding $v_2 \leq \tilde{v}$ everywhere in $\xi \Z^d$, and we are
  done.
\end{proof}
With Proposition~\ref{prop:iterated-balayage} in mind, we can also
observe that the total mass of $(\gds_{R_2}(\sigma))_+$, \ie the total
mass of $\gds_{R_2}(\sigma)$ that resides on $\bdry \hat{B}_{R_2}$, in
fact always must be \emph{strictly} less than the total mass residing
on $\bdry \hat{B}_{R_1}$ for $\gds_{R_1}(\sigma)$. (When calculating
$\gds_{R_2}(\sigma) = \gds_{R_2}(\gds_{R_1}(\sigma))$ me must topple
all the points on $\bdry \hat{B}_{R_1}$, with the consequence that at
least a fraction of that mass has to move inwards into the region
where $\gds_{R_1}(\sigma)$ is negative, thereby annihilating and
resulting in that the positive part of $\gds_{R_2}(\sigma)$ must have
strictly less total mass than the positive part of
$\gds_{R_1}(\sigma)$). One might therefore guess that this boundary
mass would vanish if we keep increasing the confining radius $R$, \ie
let $R \to \infty$. However, there does not seem to be any reason for
such a result to hold in general, at least not for dimensions
$d \geq 3$. In the upcoming paper~\cite{gustafsson-roos-16} the
example $\bal_R(\sigma, 0)$ is treated in detail, where $\sigma$ is
the measure
\begin{align*}
  \sigma = t \eta - \chi_{B(0, \rho)},
\end{align*}
and $t > 0$ is a parameter, $\eta$ is the hypersurface measure on the
unit sphere $\bdry B(0, 1)$, and $\chi_{B(0, \rho)}$ is interpreted as
the characteristic function of the set $B(0, \rho)$ times the Lebesgue
measure in $\R^d$ (or, equivalently, the restriction of the Lebesgue
measure to $B(0, \rho)$, extended with zero outside of $B(0,
\rho)$). The two radii $\rho$ and $R$ appearing in the problem are
assumed to satisfy ${0 < \rho < 1 < R}$. If $t$ and $\rho$ are chosen
suitably then $\bal_R(\sigma, 0)$ exists, and by the radial symmetry
of the problem it is possible to explicitly calculate the part of
$\bal_R(\sigma, 0)$ that is supported on the boundary $\bdry B(0,R)$,
\ie the positive part of $\bal_R(\sigma, 0)$. In particular, if we
write ${\nu := \bal_R(\sigma, 0) = \nu_+^{(R)} - \nu_-^{(R)}}$, so that
$\supp \nu_+^{(R)} \subset \bdry B(0,R)$, then the quantity
$M_R := \nu_+^{(R)}(\R^d)$, \ie the total mass residing at the
boundary $\bdry B(0,R)$, has a limit
\begin{align*}
  \lim_{R \to \infty} M_R = M_\infty := \frac{d-2}{d}
  \cdot |S^{d-1}|\, t, 
\end{align*}
where $|S^{d-1}|$ is the surface area of the unit sphere in $\R^d$. By
this example it therefore seems rather likely that any attempt of
finding an unbounded version of Theorem~\ref{thm:bdd-gds-conv-to-pb}
would be rather futile. Also, it is noteworthy that if the dimension
$d$ is very large then in the above example nearly \emph{all} of the
total mass of $\sigma_+$ would be relocated out to the boundary
$\bdry B(0,R)$ by $\bal_R(\cdot, 0)$ for large $R$, suggesting that
the boundary has a rather important impact on the problem for
$d \geq 3$.

In dimension $d = 2$ however, the situation seems slightly
different. For the above example it turns out that the total mass
$M_R$ on the boundary tends to zero as $R \to \infty$. In fact, in
\cite{gustafsson-roos-16} it is shown that the boundary mass vanishes
in general in dimension $d = 2$.  As already mentioned in
Section~\ref{sec:unrestr-part-balayage}, in~\cite{roos-15} it is shown
that one can define a partial balayage operation $\bal(\sigma, 0)$
(see Definition~\ref{def:partial-balayage-unbdd}), in some sense
corresponding to letting $R$ be $R = \infty$ in
Definition~\ref{def:partial-balayage-bdd}. As seen in
Theorem~\ref{thm:partial-balayage-unbdd-sol-fn-props}, the assumptions
on the signed measure $\sigma$ for this partial balayage measure to
exist do not need to be very harsh---negative total mass and (for
instance) continuity of the potential of the negative part of $\sigma$
are sufficient. It thus seems rather likely that there exists a limit
of the generalized divisible sandpile model for $d = 2$ as the
confining radius grows infinitely large. We have unfortunately been
unable to prove such a result, and will have to settle with a
conjecture:
\begin{conjecture}
  Let $\sigma$ be a generalized mass configuration on $\xi \Z^2$ with
  finite support. Then $M_R \to 0$ as $R \to \infty$, where $M_R$ is
  the boundary mass of $\gds_R(\sigma)$:
  \begin{align*}
    M_R := \sum_{x \in \xi \Z^2} (\gds_R(\sigma))_+(x).
  \end{align*}
\end{conjecture}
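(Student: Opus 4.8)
The plan is to prove the sharper statement that $M_R\to0$ at rate $O(1/\log R)$, by bounding the odometer of $\gds_R(\sigma)$ at a single fixed point uniformly in $R$. Throughout I assume, as elsewhere in the paper, that $\sigma$ is admissible, i.e.\ $\sum_{x\in\xi\Z^2}\sigma(x)\le0$; this is genuinely needed, since if the total mass were positive then at least that much mass would have to sit on $\bdry\hat{B}_R$ for every $R$. Write $\nu^{(R)}:=\gds_R(\sigma)=\sigma+\Delta u_R$, where $u_R$ is the limiting odometer; by Proposition~\ref{prop:bounded-gds}, $\nu^{(R)}=\nu^{(R)}_+-\nu^{(R)}_-$ with $\supp\nu^{(R)}_+\subset\bdry\hat{B}_R$, $\supp\nu^{(R)}_-\subset\supp\sigma_-$ and $0\le\nu^{(R)}_-\le\sigma_-$, and by Proposition~\ref{prop:gds-as-obst-problem}, $u_R=U^\sigma-v_R$ with $v_R$ the solution of the obstacle problem \eqref{eq:gds-def-obst-problem}.

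The first ingredient is a uniform \emph{upper} bound on $u_R$ at the origin. Since $d=2$ and $\sigma$ has finite support and non-positive total mass, the classical asymptotics of the recurrent potential kernel, $\gamma_0(0,z)=\tfrac2\pi\log|z|+\kappa+O(|z|^{-2})$ (see \cite{lawler-limic-10,lawler-91}), give $g_\xi(x,y)=-\tfrac2\pi\log|x|+O(1)$ uniformly for $y$ in the (finite) support of $\sigma$; hence $U^\sigma(x)\to+\infty$ as $|x|\to\infty$ (and $U^\sigma$ is bounded below also in the borderline case of total mass $0$). Since $U^\sigma$ is finite at every lattice point, $c_\sigma:=\inf_{x\in\xi\Z^2}U^\sigma(x)>-\infty$. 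The constant function $f\equiv c_\sigma$ has $\Delta f=0\ge0$ everywhere and $f\le U^\sigma$, so it competes in \eqref{eq:gds-def-obst-problem} for every $R$; therefore $v_R\ge c_\sigma$ and $u_R(0)=U^\sigma(0)-v_R(0)\le U^\sigma(0)-c_\sigma$ for all $R$.

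The second ingredient converts this into a bound on $M_R$ by writing $u_R$ as a potential. As $u_R$ has finite support ($u_R=0$ off $\hat{B}_R$) and $\sigma-\nu^{(R)}=-\Delta u_R$, we get $u_R=-U^{\Delta u_R}=U^{\sigma-\nu^{(R)}}=U^\sigma-U^{\nu^{(R)}_+}+U^{\nu^{(R)}_-}$, so
\begin{align*}
  U^{\nu^{(R)}_+}(0)=U^\sigma(0)+U^{\nu^{(R)}_-}(0)-u_R(0)\ \ge\ c_\sigma-C_1,
\end{align*}
where $C_1$ is any bound, uniform in $R$, for $|U^{\nu^{(R)}_-}(0)|=\xi^2\big|\sum_{y\in\supp\sigma_-}g_\xi(0,y)\nu^{(R)}_-(y)\big|$; such a $C_1$ exists because $\supp\sigma_-$ is finite and $0\le\nu^{(R)}_-\le\sigma_-$. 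On the other hand every $y\in\bdry\hat{B}_R$ satisfies $|y|\ge R$, so the same kernel asymptotics yield $g_\xi(0,y)\le C_2-\tfrac2\pi\log R$ for all such $y$ once $R$ is large, with $C_2$ independent of $R$; since $\nu^{(R)}_+\ge0$ and $\sum_y\nu^{(R)}_+(y)=M_R$,
\begin{align*}
  U^{\nu^{(R)}_+}(0)=\xi^2\sum_{y\in\bdry\hat{B}_R}g_\xi(0,y)\,\nu^{(R)}_+(y)\ \le\ \xi^2\Big(C_2-\tfrac2\pi\log R\Big)M_R.
\end{align*}
Combining the two displays, $\xi^2\big(\tfrac2\pi\log R-C_2\big)M_R\le C_1-c_\sigma$ for all large $R$, whence $0\le M_R\le\big(C_1-c_\sigma\big)\big/\big(\xi^2(\tfrac2\pi\log R-C_2)\big)\to0$ (and in fact $M_R=0$ outright once $R$ is large if $c_\sigma\ge C_1$), which is the conjecture.

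The main point to be nailed down — and, I suspect, why the statement was left open — is not the soft part above but this quantitative input: one must check that the asymptotic expansion of the discrete Green's function $g_\xi(0,\cdot)$ holds with an error that is uniform over the outer boundary shell $\bdry\hat{B}_R$ as $R\to\infty$ (and over the fixed finite set $\supp\sigma_-$). This is precisely where $d=2$ is essential: it is the logarithmic blow-up of $-g_\xi$ that drives $M_R$ to zero, and for $d\ge3$ the kernel stays bounded and the argument collapses, consistent with the explicit computation in \cite{gustafsson-roos-16} showing that $M_R\not\to0$ in higher dimensions. Granting the uniform estimates for $g_\xi$ (available in \cite{lawler-limic-10,lawler-91}), together with the observation that a constant always competes in the obstacle problem, the proof is essentially the two displays above.
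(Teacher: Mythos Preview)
The paper does not prove this statement at all: it is explicitly left as a conjecture, with the authors writing that they ``have unfortunately been unable to prove such a result, and will have to settle with a conjecture.'' So there is no paper proof to compare against. Your argument, however, appears to be correct and in fact settles the conjecture.

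The two key observations are clean and sound: first, that the constant function $f\equiv c_\sigma:=\inf_{x\in\xi\Z^2}U^\sigma(x)$ competes in the obstacle problem \eqref{eq:gds-def-obst-problem} for every $R$, giving the uniform bound $u_R(0)\le U^\sigma(0)-c_\sigma$; second, that via $u_R=U^{\sigma-\nu^{(R)}}$ this yields a lower bound on $U^{\nu^{(R)}_+}(0)$ independent of $R$, which is incompatible with the logarithmic decay of $g_\xi(0,\cdot)$ on $\bdry\hat{B}_R$ unless $M_R\to0$. The finiteness of $c_\sigma$ when $\sum\sigma\le0$ follows exactly as you say from the two-dimensional kernel asymptotics, and the uniform bound on $U^{\nu^{(R)}_-}(0)$ is immediate from $\supp\nu^{(R)}_-\subset\supp\sigma_-$ (finite) and $0\le\nu^{(R)}_-\le\sigma_-$, both of which are in Proposition~\ref{prop:bounded-gds}.

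Your self-flagged worry about uniformity of the potential-kernel asymptotics is not the obstruction you suspect: the expansion $\gamma_0(0,z)=\tfrac2\pi\log|z|+\kappa+O(|z|^{-2})$ from \cite{lawler-limic-10,lawler-91} has an error term that decays uniformly as $|z|\to\infty$, which is precisely what is needed on $\bdry\hat{B}_R$ (and over the fixed finite set $\supp\sigma_-$ the estimate is trivial). One cosmetic point: your parenthetical about ``$M_R=0$ outright once $R$ is large if $c_\sigma\ge C_1$'' is slightly misleading, since the chain of inequalities you derive actually forces $c_\sigma\le C_1$ for large $R$, making that case vacuous; but this does not affect the validity of the main bound. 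The argument goes through and yields the quantitative rate $M_R=O(1/\log R)$.
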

As a final remark, we note that one way of interpreting such a
result---if it holds---is that the confining radius $R > 0$ that we
introduced to ensure convergence of the generalized model in a sense
is unnecessary in dimension $d = 2$. On the other hand, based on the
above example the confining radius seems required in dimensions
$d \geq 3$. Given the recently developed strong connections between
the standard divisible sandpile and the so-called internal diffusion
limited aggregation (IDLA) model for particle aggregation, which uses
simple random walks as a means to relocate excess mass, it does not
seem too unlikely that the apparent difference in behaviour between
$d = 2$ and $d \geq 3$ for the generalized divisible sandpile may have
something to do with the result by G.~P{\'o}lya~\cite{polya-21} that
the simple random walk is recurrent in dimension $d = 2$ and transient
if $d \geq 3$.

\bibliography{refs}

\begin{thebibliography}{10}

\bibitem{balogh-harnad-09}
{\sc F.~Balogh and J.~Harnad}, {\em Superharmonic perturbations of a {G}aussian
  measure, equilibrium measures and orthogonal polynomials}, Complex Anal.
  Oper. Theory, 3 (2009), pp.~333--360.

\bibitem{gustafsson-04}
{\sc B.~Gustafsson}, {\em Lectures on balayage}, in Clifford algebras and
  potential theory, vol.~7 of Univ. Joensuu Dept. Math. Rep. Ser., Univ.
  Joensuu, Joensuu, 2004, pp.~17--63.

\bibitem{gustafsson-roos-16}
{\sc B.~Gustafsson and J.~Roos}, {\em Partial balayage on {R}iemannian
  manifolds}.
\newblock In preparation.

\bibitem{gustafsson-sakai-94}
{\sc B.~Gustafsson and M.~Sakai}, {\em Properties of some balayage operators,
  with applications to quadrature domains and moving boundary problems},
  Nonlinear Anal., 22 (1994), pp.~1221--1245.

\bibitem{helms-14}
{\sc L.~L. Helms}, {\em Potential theory}, Universitext, Springer, London,
  second~ed., 2014.

\bibitem{lawler-91}
{\sc G.~F. Lawler}, {\em Intersections of random walks}, Probability and its
  Applications, Birkh\"auser Boston, Inc., Boston, MA, 1991.

\bibitem{lawler-limic-10}
{\sc G.~F. Lawler and V.~Limic}, {\em Random walk: a modern introduction},
  vol.~123 of Cambridge Studies in Advanced Mathematics, Cambridge University
  Press, Cambridge, 2010.

\bibitem{levine-07}
{\sc L.~Levine}, {\em Limit Theorems for Internal Aggregation Models}, PhD
  thesis, University of California, 2007.

\bibitem{levine-peres-09}
{\sc L.~Levine and Y.~Peres}, {\em Strong spherical asymptotics for
  rotor-router aggregation and the divisible sandpile}, Potential Anal., 30
  (2009), pp.~1--27.

\bibitem{levine-peres-10}
\leavevmode\vrule height 2pt depth -1.6pt width 23pt, {\em Scaling limits for
  internal aggregation models with multiple sources}, J. Anal. Math., 111
  (2010), pp.~151--219.

\bibitem{polya-21}
{\sc G.~P{\'o}lya}, {\em \"{U}ber eine {A}ufgabe der
  {W}ahrscheinlichkeitsrechnung betreffend die {I}rrfahrt im {S}tra\ss ennetz},
  Math. Ann., 84 (1921), pp.~149--160.

\bibitem{roos-15}
{\sc J.~Roos}, {\em Equilibrium measures and partial balayage}, Complex Anal.
  Oper. Theory, 9 (2015), pp.~65--85.

\bibitem{saff-totik-97}
{\sc E.~B. Saff and V.~Totik}, {\em Logarithmic potentials with external
  fields}, vol.~316 of Grundlehren der Mathematischen Wissenschaften
  [Fundamental Principles of Mathematical Sciences], Springer-Verlag, Berlin,
  1997.
\newblock Appendix B by Thomas Bloom.

\end{thebibliography}

\newpage
\noindent Joakim Roos \\[.1cm]
Department of Mathematics \\
KTH \\
SE-100 44 Stockholm \\
Sweden \\[.1cm]
e-mail: \texttt{joakimrs@math.kth.se}

\end{document}